\theoremstyle{definition}
\newtheorem{definition}{Definition}[section]
\theoremstyle{plain}
\newtheorem{theorem}[definition]{Theorem}
\newtheorem{proposition}[definition]{Proposition}
\newtheorem{lemma}[definition]{Lemma}
\newtheorem{claim}[definition]{Claim}
\newtheorem{question}[definition]{Question}
\numberwithin{equation}{section}
\title[Embeddability of RAAGs on complements of linear forests]{Embeddability of right-angled Artin groups on the complements of linear forests}
\author[T.~Katayama]{Takuya Katayama}
\address{
(Takuya Katayama)
Department of Mathematics, 
Hiroshima University, 
1-3-1 Kagamiyama, Higashi-Hiroshima, Hiroshima 739-8526, Japan 
}
\email{tkatayama@hiroshima-u.ac.jp}
\date{\today}
\keywords{Right-angled Artin group}
\subjclass[2010]{Primary: 20F36}
\begin{document}

\begin{abstract}
In this article, we prove that embeddings of right-angled Artin group $A_1$ on the complement of a linear forest into another right-angled Artin group $A_2$ can be reduced to full embeddings of the defining graph of $A_1$ into the extension graph of the defining graph of $A_2$. 
\end{abstract}

\maketitle

%%%%%%%%%%%%%%%%%
%%%%%Introduction%%%%%%
%%%%%%%%%%%%%%%%%
\section{Introduction}\label{Introduction}
Let $\Gamma$ be a simple graph (abbreviated a graph). 
We denote the vertex set and the edge set of $\Gamma$ by $V(\Gamma)$ and $E(\Gamma)$, respectively. 
The {\it right-angled Artin group} on $\Gamma$ is the group given by the following presentation:
$$
A(\Gamma) = \langle V(\Gamma) \mid \  v_i v_j v_i^{-1} v_j^{-1} = 1  \ \mbox{if} \ \{ v_i,v_j \} \in E(\Gamma) \ \rangle
.$$ 
A {\it graph homomorphism} is a map between the vertex sets of two graphs, which maps adjacent vertices to adjacent vertices. 
An injective graph homomorphism (abbreviated an embedding) $\iota \colon \Lambda \rightarrow \Gamma$ is called {\it full} if $\iota$ maps non-adjacent vertices to non-adjacent vertices. 
If there is a full embedding $\iota \colon \Lambda \rightarrow \Gamma$, then we denote by $\Lambda \leq \Gamma$ and $\Lambda$ is called a {\it full subgraph} of $\Gamma$. 
For finite graphs $\Lambda$ and $\Gamma$, it is well-known that $A(\Lambda)$ is isomorphic to $A(\Gamma)$ as a group if and only if $\Lambda$ is isomorphic to $\Gamma$ as a graph. 
Following S.~Kim and T.~Koberda \cite{Kim--Koberda13}, the {\it extension graph} $\Gamma^e$ of a finite graph $\Gamma$ is the graph such that the vertex set of $\Gamma^e$ consists of the words in $A(\Gamma)$ that are conjugate to the vertices of $\Gamma$, and two vertices of $\Gamma^e$ are joined by an edge if and only if they are commutative as words in $A(\Gamma)$. 
A celebrated theorem due to Kim--Koberda states that, if a finite graph $\Lambda$ is a full subgraph of the extension graph $\Gamma^e$ of a finite graph $\Gamma$, then we have an injective homomorphism (abbreviated an embedding) $A(\Lambda) \hookrightarrow A(\Gamma)$. 
In this article, a {\it path graph} $P_n$ on $n \ (\geq 1)$ vertices is the graph whose underlying space is homeomorphic to the origin $\{ 0 \}$ or unit interval $[0, 1]$ in the $1$-dimensional Euclidean space. 
A {\it linear forest} is the disjoint union of finitely many path graphs. 
The {\it complement} $\Lambda^c$ of a graph $\Lambda$ is the graph consisting of the vertex set $V(\Lambda^c) = V(\Lambda)$ and the edge set $E(\Lambda^c)= \{ \{ u, v \} \mid u,v \in V(\Lambda), \ \{ u , v \} \notin E(\Lambda) \}$. 

In \cite{Katayama17} the author ``proved" the following theorem. 

\begin{theorem}
Let $\Lambda$ be the complement of a linear forest and $\Gamma$ a finite graph. 
If $A(\Lambda) \hookrightarrow A(\Gamma)$, then $\Lambda \leq \Gamma$. 
\label{Main-theorem} 
\end{theorem}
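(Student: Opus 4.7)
The proof strategy is to factor the theorem through the extension graph: first reduce a group embedding $A(\Lambda) \hookrightarrow A(\Gamma)$ to a full graph embedding $\Lambda \leq \Gamma^e$, and then reduce this to a full graph embedding $\Lambda \leq \Gamma$. The abstract advertises the first reduction as the main new content of the paper, so this is where I expect the real work to lie.

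For step one, I would exploit the fact that the complement $\Lambda$ of a linear forest $P_{n_1} \sqcup \cdots \sqcup P_{n_k}$ splits as a join $\Lambda = \overline{P_{n_1}} + \cdots + \overline{P_{n_k}}$, so that $A(\Lambda) \cong A(\overline{P_{n_1}}) \times \cdots \times A(\overline{P_{n_k}})$. An embedding $\phi: A(\Lambda) \hookrightarrow A(\Gamma)$ then produces $k$ mutually commuting subgroups, each of which must lie inside the centralizer of the others. Using the centralizer theory of RAAGs (after Servatius and Baudisch), each factor $\phi(A(\overline{P_{n_i}}))$ can be pinned inside a parabolic subgroup defined by a star in $\Gamma^e$. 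An induction on the total number of vertices of $\Lambda$, peeling off an endpoint of a longest path and applying the extension-graph machinery of Kim--Koberda to the residual factor, should then produce the required full copy of $\Lambda$ in $\Gamma^e$.

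For step two, the point is that $\Gamma^e$ is built from $\Gamma$ by iteratively doubling over stars of vertices, and existing untwisting results show that a full subgraph $\Lambda \leq \Gamma^e$ can be straightened to a full subgraph of $\Gamma$ provided $\Lambda$ avoids certain co-contraction obstructions. Complements of linear forests are combinatorially tame enough that these obstructions do not arise, and a direct verification along the join decomposition (together with the rigidity of each $\overline{P_{n_i}}$) should suffice.

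The main obstacle is step one. An arbitrary embedding $\phi$ might send a vertex generator of $A(\Lambda)$ to a complicated element of $A(\Gamma)$ that is not visibly a conjugate of a $\Gamma$-generator, and the earlier proof flagged by the author's scare quotes in the introduction presumably stumbled precisely here. The fix lies in a careful analysis of $\phi$ restricted to each join factor, exploiting the fact that $\overline{P_n}$ contains a vertex whose co-degree forces its image under $\phi$ to sit rigidly inside the centralizer of some conjugate of a standard generator in $A(\Gamma)$; once this rigidity is secured for each join factor, the vertices of $\Gamma^e$ witnessing $\Lambda \leq \Gamma^e$ can be extracted without ambiguity.
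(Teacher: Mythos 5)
Your high-level architecture matches the paper's: decompose $\Lambda$ as a join of complements of paths, use Kim--Koberda's Theorem \ref{Kim--Koberda} to replace the given embedding by one into $A(\Gamma^e)$ whose vertex-images have supports spanning cliques, handle each join factor separately, and reassemble (the paper's Proposition \ref{joining-EGP}); the passage from $\Lambda \leq \Gamma^e$ to $\Lambda \leq \Gamma$ is then quoted from \cite{Katayama17} rather than reproved. But the heart of the matter --- producing a full copy of $P_n^c$ in $\Gamma^e$ from an embedding $\psi \colon A(P_n^c) \hookrightarrow A(\Gamma^e)$ satisfying (KK) --- is exactly where your sketch has no content. ``Peeling off an endpoint of a longest path and applying the extension-graph machinery'' is a hope, not an argument, and it is suspect on its face: by the Lee--Lee counterexample, the conclusion $\Lambda \leq \Gamma^e$ is \emph{false} for complements of general forests (even trees), so any valid proof must use the linearity of the path in an essential, visible way. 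Your proposal never identifies where linearity enters. The paper's mechanism is concrete: letting $C_i$ be the clique spanned by $\mathrm{supp}(\psi(v_i))$, one shows that if there is no sequence of distinct vertices $y^{(i)} \in C_i$ with consecutive terms non-adjacent, then the specific commutator $[(v_1)^{v_2 v_3 \cdots v_{n-1}}, v_n]$, which is nontrivial in $A(P_n^c)$, lies in $\ker\psi$; this is proved by an explicit rewriting of $\psi(v_1)^{\psi(v_2)\cdots\psi(v_{n-1})}$ as a word in vertices (the sets $Y^{(i)}$) that all commute with every vertex of $C_n$. That conjugating element $v_2 v_3 \cdots v_{n-1}$ is a walk along the path $P_n$, and this is precisely where the linear-forest hypothesis does its work.

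Two further gaps: first, the case $n=3$ genuinely requires a separate argument (the distinctness bookkeeping in the commutator argument needs $n \geq 4$); the paper handles it by showing that otherwise $A(\Gamma)$ would be a direct product of free groups and then invoking Kim--Koberda's result on centerless RAAGs in direct products (Lemma \ref{Kim--Koberda-P_3^c}), using that $A(P_3^c) \cong \mathbb{Z} * \mathbb{Z}^2$ has no center and $P_3^c$ contains no induced square. Your sketch does not notice this exceptional case. Second, your ``rigidity'' claim --- that some vertex of $P_n^c$ has its image pinned inside the centralizer of a conjugate of a standard generator --- is essentially a local form of the Casals-Ruiz statement that the paper's introduction explains is false in general; without a precise substitute for it you have not isolated what saves the linear case. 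The concluding step from $\Gamma^e$ down to $\Gamma$ is outside the scope of what this paper reproves, so your vagueness there is less damaging, but step one as you describe it does not constitute a proof.
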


We remark that Theorem \ref{Main-theorem} is equivalent to \cite[Theorem 1.3(1)]{Katayama17}. 
In the proof of \cite[Theorem 1.3(1)]{Katayama17}, the author used the following ``Theorem" (see the second line of the proof of Theorem 3.6 in \cite{Katayama17}). 

\begin{theorem}[{\cite[Theorem 3.14]{Casals-Ruiz15}}]
Let $\Lambda$ be the complement of a forest and $\Gamma$ a finite graph. 
If $A(\Lambda) \hookrightarrow A(\Gamma)$, then $\Lambda \leq \Gamma^e$. 
\label{Casals-Ruiz_lemma}
\end{theorem}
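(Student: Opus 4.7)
The plan is to argue by induction on $|V(\Lambda)|$, peeling off a leaf of the forest $\Lambda^c$ and extending a full embedding of the smaller complement-of-forest into $\Gamma^e$. The base case $|V(\Lambda)| = 1$ is immediate, since $\Gamma^e$ is non-empty whenever $A(\Gamma) \neq 1$. If $\Lambda^c$ has no edges, then $\Lambda$ is complete and the embedding $\mathbb{Z}^{|V(\Lambda)|} \hookrightarrow A(\Gamma)$ forces a clique of the same size in $\Gamma$, hence in $\Gamma^e$.

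For the inductive step, I would pick a leaf $v$ of the forest $\Lambda^c$ with unique $\Lambda^c$-neighbor $p$, so that in $\Lambda$ the vertex $v$ is adjacent to every element of $\Lambda' := \Lambda - v$ except $p$. The given embedding $\phi \colon A(\Lambda) \hookrightarrow A(\Gamma)$ restricts to $A(\Lambda') \hookrightarrow A(\Gamma)$, and by the inductive hypothesis there is a full embedding $\iota' \colon \Lambda' \hookrightarrow \Gamma^e$. The goal is then to extend $\iota'$ by choosing a vertex $\tilde{v} \in V(\Gamma^e)$ that is adjacent in $\Gamma^e$ to $\iota'(u)$ for each $u \in V(\Lambda')$ with $u \neq p$, and not adjacent to $\iota'(p)$.

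The main obstacle is precisely the construction of $\tilde{v}$. The commutation relations $[\phi(v), \phi(u)] = 1$ for $u \neq p$, together with $[\phi(v), \phi(p)] \neq 1$, place $\phi(v)$ in the intersection of the centralizers of $\iota'(u)$ (for $u \neq p$) but outside the centralizer of $\iota'(p)$. By Servatius's centralizer theorem in right-angled Artin groups and the Kim--Koberda dictionary between commutation in $A(\Gamma)$ and adjacency in $\Gamma^e$, one must then extract from $\phi(v)$ a conjugate of a single vertex of $\Gamma$ sitting in the intersection of the stars of $\iota'(u)$ in $\Gamma^e$ (for $u \neq p$) while lying outside the star of $\iota'(p)$. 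Extracting such a single-vertex conjugate from what is a priori only a product of conjugates is where the forest hypothesis on $\Lambda^c$ enters essentially -- cyclic configurations (such as $\Lambda^c$ containing an induced $C_4$) would obstruct a clean extraction -- and where the limit-group and JSJ-decomposition techniques for RAAGs developed by Casals-Ruiz supply the required tool. Once $\tilde{v}$ is produced, extending $\iota'$ by $v \mapsto \tilde{v}$ yields the desired full embedding $\Lambda \hookrightarrow \Gamma^e$, completing the induction.
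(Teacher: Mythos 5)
The statement you are trying to prove is false, and the paper says so explicitly: E.~Lee and S.~Lee \cite{Lee--Lee17} produced a counterexample to \cite[Theorem 3.14]{Casals-Ruiz15}, that is, a complement of a forest $\Lambda$ and a finite graph $\Gamma$ with $A(\Lambda) \hookrightarrow A(\Gamma)$ but $\Lambda \not\leq \Gamma^e$. The paper records this ``Theorem'' only to explain why the author's earlier argument was invalid; its actual contribution is that the conclusion survives under the stronger hypothesis that $\Lambda^c$ is a \emph{linear} forest (a disjoint union of path graphs). Even in that restricted setting the proof is not an induction on leaves: one first passes to an embedding into $A(\Gamma^e)$ satisfying the (KK) support condition via Kim--Koberda, reduces to a single join factor $P_n^c$, and shows that if no suitable sequence of pairwise distinct, consecutively non-adjacent support vertices exists, then the explicit iterated commutator $[(v_1)^{v_2\cdots v_{n-1}}, v_n]$ lies in the kernel, contradicting injectivity.

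The gap in your argument sits exactly where you flag ``the main obstacle.'' First, a structural issue: your inductive hypothesis produces \emph{some} full embedding $\iota'$ of $\Lambda'$ into $\Gamma^e$, with no stated relation to $\phi$, so the relations $[\phi(v),\phi(u)]=1$ say nothing about centralizers of the vertices $\iota'(u)$; you would need the induction to carry a compatibility condition such as $\iota'(u) \in \mathrm{supp}(\phi(u))$, which is what the paper's Lemma 4.2 actually tracks. Second, and fatally, the claimed extraction of a single conjugate vertex $\tilde v$ from $\phi(v)$ cannot be carried out under the forest hypothesis alone: $\phi(v)$ is in general a product of conjugates whose support spans a clique, and the Lee--Lee counterexample shows that no valid choice of $\tilde v$ need exist even though every commutation constraint you list is satisfied. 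The appeal to ``limit-group and JSJ-decomposition techniques'' is precisely the unsubstantiated step; no such tool can close the gap, because the conclusion itself fails for general forests.
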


However, E.~Lee and S.~Lee \cite{Lee--Lee17} pointed out that the above ``Theorem" is incorrect by giving a counter-example. 
Thus the author's proof of Theorem \ref{Main-theorem} in \cite{Katayama17} is not valid. 

The purpose of this article is to give a complete proof of Theorem \ref{Main-theorem} by establishing the following theorem which shows that ``Theorem \ref{Casals-Ruiz_lemma}" holds when $\Lambda$ is the complement of a linear forest. 

\begin{theorem}
Let $\Lambda$ be the complement of a linear forest and $\Gamma$ a finite graph. 
If $A(\Lambda) \hookrightarrow A(\Gamma)$, then $\Lambda \leq \Gamma^e$. 
\label{linear-forest_lemma}
\end{theorem}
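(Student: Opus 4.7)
The plan is to combine two ingredients: a base case handling a single path graph, and an inductive argument that extends the result to disjoint unions of paths via the join decomposition of $\Lambda$.

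First I would write $F = P_{n_1} \sqcup \cdots \sqcup P_{n_k}$, so that $\Lambda = P_{n_1}^c * \cdots * P_{n_k}^c$ (join of complements) and correspondingly $A(\Lambda) = A(P_{n_1}^c) \times \cdots \times A(P_{n_k}^c)$. I would then proceed by induction on $k$, the number of path components.

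The base case $k = 1$ is the statement for a single path: any embedding $A(P_n^c) \hookrightarrow A(\Gamma)$ should yield $P_n^c \leq \Gamma^e$. I would approach this by analyzing the images in $A(\Gamma)$ of the vertices of $P_n^c$ and using standard centralizer theory in $A(\Gamma)$ (centralizers of vertices being RAAGs on stars) to locate conjugates of vertices of $\Gamma$ realizing the $P_n^c$ pattern. The rigid linear structure of the path $P_n$ gives tight control over commutation relations, which I would propagate through conjugation to produce a full embedding into $\Gamma^e$.

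For the inductive step, I would isolate one factor $A(P_{n_k}^c)$ and examine how its image commutes in $A(\Gamma)$ with the image of the remaining factors. Since direct product factors embed into commuting subgroups, centralizer analysis in $A(\Gamma)$ shows that the images lie in subgroups whose realizations in $\Gamma^e$ are joined to one another. Combined with the inductive hypothesis applied to each factor, this would yield full embeddings $P_{n_i}^c \leq \Gamma^e$ with images that are mutually joined in $\Gamma^e$, hence a full embedding of the join $\Lambda$ into $\Gamma^e$.

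The main obstacle is establishing this mutual joining in the inductive step. The Lee--Lee counter-example to Theorem \ref{Casals-Ruiz_lemma} shows that such joining can fail once $F$ is allowed to have a component which is a tree with branching. The proof must therefore exploit the specific combinatorial features of paths---namely their linear, branch-free structure---to guarantee that all conjugates obtained from different factors commute with one another in $A(\Gamma)$, and hence are adjacent in $\Gamma^e$. I expect this is where the hypothesis that $F$ is a \emph{linear} forest, rather than an arbitrary forest, is genuinely used.
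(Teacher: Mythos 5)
Your overall architecture (decompose $\Lambda$ as the join $P_{n_1}^c * \cdots * P_{n_k}^c$, treat a single path component first, then assemble the components) matches the paper's, but both halves of your plan have genuine gaps. The base case is the entire content of the theorem, and ``standard centralizer theory'' does not deliver it. The paper's route is: first apply Kim--Koberda (Theorem \ref{Kim--Koberda}) to replace the given embedding by one $\psi \colon A(\Lambda) \hookrightarrow A(\Gamma^e)$ in which every vertex has clique support (condition (KK)); only then is Servatius' centralizer theorem (Lemma \ref{commutative}) usable vertex-by-vertex. Even after that reduction, producing a full copy of $P_n^c$ is delicate: in Lemma \ref{one-component-anti-path} one assumes no suitable sequence of distinct, consecutively non-adjacent vertices $y^{(i)} \in \mathrm{supp}(\psi(v_i))$ exists, and then shows by an explicit rewriting process (the sets $Y^{(i)}$ and words $\check W_i$) that the specific element $[(v_1)^{v_2\cdots v_{n-1}}, v_n]$ lies in $\ker\psi$, contradicting injectivity. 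This is where the branch-free structure of the path is genuinely exploited --- not, as you suggest, in the joining step. Moreover this argument fails for $n=3$ (it needs $n-1>2$), and the paper must handle $P_3^c$ separately (Lemma \ref{one-component-anti-path-3}) via the structure of $\mathbb{Z}^2 * \mathbb{Z}$ inside direct products of free groups; your proposal does not notice this exceptional case.

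Your inductive step also does not close as stated. The inductive hypothesis ``$P_{n_i}^c \leq \Gamma^e$'' records nothing about \emph{where} the image lies, so you cannot conclude that the copies coming from different factors are mutually joined, nor even that they are disjoint. What makes the assembly work in the paper (Proposition \ref{joining-EGP}) is the strengthened conclusion of the base case: the full embedding $\iota_i$ can be chosen with $\iota_i(\Lambda_i) \subset \mathrm{supp}(\psi_i)$ (indeed $\iota(v) \in \mathrm{supp}(\psi(v))$ when $n_i \neq 3$). Given that, the (KK) condition plus Lemma \ref{commutative} forces vertices supporting commuting factors to be adjacent, and the irreducibility of each $P_{n_i}^c$ (every vertex has a non-neighbour in its own component) forces the images to be disjoint. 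So you need to carry the support condition through the induction as part of the statement; without it, the ``mutual joining'' you flag as the main obstacle really is unobtainable from commutation of the factor images alone.
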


In fact, the author applied ``Theorem \ref{Casals-Ruiz_lemma}" only for the complement of linear forests in the proof of 
Theorem 1.3(1) in \cite{Katayama17}. 

We note that this theorem gives a partial positive answer to the following question. 

\begin{question}[{\cite[Question 1.5]{Kim--Koberda13}}]
For which graphs $\Lambda$ and $\Gamma$ do we have $A(\Lambda) \hookrightarrow A(\Gamma)$ only if $\Lambda \leq \Gamma^e$? 
\end{question}

With regard to this question, the reader is referred to the introduction of the paper \cite[Question 1.5]{Lee--Lee17} due to Lee--Lee. 

This article is organized as follows. 
In Section \ref{prelim}, we introduce terminology and known results. 
For the sake of convenience, we discuss relation between graph-join (a certain graph operation) and embedding problems in Section \ref{graph-join-sec}. 
Section \ref{the-linear-forest-lemma-sec} is devoted to the proof of Theorem \ref{linear-forest_lemma}.

\subsection*{Acknowledgements}
The author would like to thank his supervisor, Makoto Sakuma, for helpful discussions and a number of improvements regarding this paper. 
The author thanks to Eon-kyung Lee and Sang-jin Lee for their encouragements. 

%%%%%%%%%%%%%%%%%
%%%%%Preliminaries%%%%%%
%%%%%%%%%%%%%%%%%
\section{Preliminaries \label{prelim}}

Suppose that $\Gamma$ is a graph. 
An element of $V(\Gamma) \cup V(\Gamma)^{-1}$ is called a {\it letter}. 
Any element in $A(\Gamma)$ can be expressed as a {\it word}, which is a finite multiplication of letters. 
Let $w= a_1 \cdots a_l$ be a word in $A(\Gamma)$ where $a_1, \ldots, a_l$ are letters. 
We say $w$ is {\it reduced} if any other word representing the same element as $w$ in $A(\Gamma)$ has at least $l$ letters. 
The following lemma is useful for checking whether a given word is reduced or not (cf. \cite[Section 5]{Crisp--Wiest04}). 

\begin{lemma}
Let $w$ be a word in $A(\Gamma)$. 
Then $w$ is reduced if and only if $w$ does not contain a word of the form $v^{\epsilon}xv^{- \epsilon}$, where $v$ is a vertex of $\Gamma$, $\epsilon = \pm 1$ and $x$ is a word such that $v$ is commutative with all of the letters in $x$. 
\label{reduced}
\end{lemma}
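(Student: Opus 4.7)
The lemma is a standard technical fact about right-angled Artin groups. I would prove the two implications of the biconditional separately. The ``if'' implication (existence of such a subword forces $w$ to be non-reduced) is immediate: writing $w = w_1 \cdot v^{\epsilon} x v^{-\epsilon} \cdot w_2$ with $v$ commuting with every letter of $x$, the defining commutation relations of $A(\Gamma)$ let me slide $v^{-\epsilon}$ leftward past the letters of $x$ one at a time, so that $w = w_1 \cdot v^{\epsilon} v^{-\epsilon} x \cdot w_2 = w_1 x w_2$ in $A(\Gamma)$. Since the right-hand side has length $|w|-2$, the word $w$ is not reduced.

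For the ``only if'' implication I would argue the contrapositive: if $w$ is not reduced, then it contains a subword of the stated form. My plan is to invoke the classical word-equivalence theorem for graph products, which asserts that two words represent the same element of $A(\Gamma)$ if and only if they are related by a finite sequence of moves of two types: (a) swap two adjacent letters whose underlying vertices are adjacent in $\Gamma$, or (b) delete or insert an adjacent pair $v^{\epsilon} v^{-\epsilon}$. Call a word \emph{locally reduced} if it contains no subword $v^{\epsilon} x v^{-\epsilon}$ with $v$ commuting with every letter of $x$; equivalently, no finite sequence of type (a) moves applied to the word can bring a pair $v^{\epsilon} v^{-\epsilon}$ into adjacency. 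The contrapositive then reduces to the claim that every locally reduced word has minimal length in its group-theoretic equivalence class. This follows by a rewriting-system argument: the rule ``erase a pair $v^{\epsilon} v^{-\epsilon}$ once commutations have brought it into adjacency'' is length-decreasing and, by a short critical-pair analysis, confluent modulo commutations, so every word reduces to a locally reduced normal form (unique up to type (a) moves) whose length coincides with the minimal word length of the corresponding group element.

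The main obstacle is the background word-equivalence theorem for $A(\Gamma)$ on which the confluence discussion rests; it is a standard result in trace/graph-product theory and is precisely the content referenced in Crisp--Wiest, Section 5, so I would cite it rather than reprove it. A fully self-contained treatment via the Cartier--Foata heap normal form on the signed vertex set $V(\Gamma) \cup V(\Gamma)^{-1}$ is possible but tangential to the paper's main goal.
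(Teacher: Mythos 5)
The paper gives no proof of this lemma at all: it is stated as a known fact with a pointer to Crisp--Wiest, Section~5, so your proposal is being compared against a citation rather than an argument. Your outline is correct and amounts to the same strategy --- the easy implication is done in full (and survives the minor subtlety that $x$ may itself contain letters $v^{\pm 1}$, since one can always pass to an innermost such subword), while the hard implication is correctly reduced to the word-equivalence/normal-form theorem for graph products, which you, like the paper, defer to the literature.
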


The {\it support} of a reduced word $w$ is the smallest subset $S$ of $V(\Gamma)$ such that each letter of $w$ is in $S$ or $S^{-1}$; we write $S= \mathrm{supp}(w)$. 
It is well-known that the support does not depend on the choice of a reduced word, and so we can define the support of an element of $A(\Gamma)$. 
By a {\it clique}, we mean a complete subgraph of a graph. 
We rephrase a special case of Servatius' Centralizer Theorem {\cite[The Centralizer Theorem in Section III]{Servatius89}} as follows. 

\begin{lemma}
Let $w_1$ and $w_2$ be reduced words in $A(\Gamma)$ whose supports span cliques in $\Gamma$. 
Then the words $w_1$ and $w_2$ are commutative if and only if $\mathrm{supp}(w_1)$ and $\mathrm{supp}(w_2)$ are contained in a single clique in $\Gamma$. 
\label{commutative}
\end{lemma}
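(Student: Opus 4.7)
The plan is to prove the two directions separately. For the easy direction, if $S_1 \cup S_2$ (where $S_i := \mathrm{supp}(w_i)$) is contained in a single clique of $\Gamma$, then every letter appearing in $w_1$ either equals or is adjacent to every letter appearing in $w_2$; repeatedly commuting adjacent commuting letters then transforms $w_1 w_2$ into $w_2 w_1$ inside $A(\Gamma)$.

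For the converse, the plan is to invoke Servatius' Centralizer Theorem applied to $w_1$. First I would observe that $w_1$ is cyclically reduced: its letters pairwise commute, so no cancellation of the type $v^{\epsilon} x v^{-\epsilon}$ forbidden by Lemma~\ref{reduced} can arise in any cyclic conjugate. Writing $S_1 = \{v_1, \ldots, v_p\}$, the clique hypothesis lets me rewrite $w_1 = v_1^{a_1} \cdots v_p^{a_p}$ for some nonzero exponents $a_i$. I read this expression as the decomposition of $w_1$ into pairwise-commuting cyclically-reduced factors with pairwise-disjoint supports that appears in Servatius' theorem; because $S_1$ is a clique, these factors are forced to be single-vertex powers. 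The theorem then identifies the centralizer as
\[
C(w_1) \;=\; \langle v_1 \rangle \times \cdots \times \langle v_p \rangle \times A(L),
\]
where $L$ is the set of vertices of $\Gamma$ adjacent to every vertex of $S_1$. In particular, every element of $C(w_1)$ is supported in $S_1 \cup L$.

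Since $w_1$ and $w_2$ commute, $w_2 \in C(w_1)$, hence $S_2 \subseteq S_1 \cup L$. A short case analysis finishes the argument: any two distinct vertices both lying in $S_1$ or both lying in $S_2$ are adjacent by the individual clique hypotheses, while a pair $(a,b)$ with $a \in S_1$ and $b \in S_2 \setminus S_1 \subseteq L$ is adjacent by the definition of $L$, so $S_1 \cup S_2$ is a clique. I expect the main obstacle to be the correct invocation of Servatius' theorem: its general form describes the centralizer of a cyclically reduced element via a decomposition into indecomposable pairwise-commuting pieces, and the key observation is that the clique hypothesis on $\mathrm{supp}(w_1)$ collapses that decomposition to the single-vertex powers used above, reducing the whole statement to bookkeeping about supports.
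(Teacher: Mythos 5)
Your argument is correct and follows the same route as the paper, which gives no proof of its own but simply presents the lemma as a rephrased special case of Servatius' Centralizer Theorem. Your derivation---observing that the clique hypothesis makes $w_1$ a cyclically reduced product of single-vertex powers, so that $C(w_1)=\langle v_1\rangle\times\cdots\times\langle v_p\rangle\times A(L)$, and then reading off $\mathrm{supp}(w_2)\subseteq S_1\cup L$---is exactly the intended justification, with the details the paper leaves implicit correctly filled in.
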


In this article, we say that a homomorphism $\psi \colon A(\Lambda) \rightarrow A(\Gamma)$ between two right-angled Artin groups satisfies {\it (KK) condition}  if $\mathrm{supp}(\psi(v))$ consists of mutually adjacent vertices in $\Gamma$ for all $v \in V(\Lambda)$ (i.e., $\mathrm{supp}(\psi(v))$ spans a clique in $\Gamma$). 

\begin{theorem}[{\cite[Theorem 4.3]{Kim--Koberda13}}]
Suppose that $\Lambda$ and $\Gamma$ are finite graphs and $A(\Lambda) \hookrightarrow A(\Gamma)$. 
Then there is an embedding $\psi \colon A(\Lambda) \hookrightarrow A(\Gamma^e)$ such that $\psi$ satisfies (KK) condition. 
Namely, for all $v \in V(\Lambda)$, $\mathrm{supp}(\psi(v))$ consists of mutually adjacent vertices in $\Gamma^e$. 
\label{Kim--Koberda}
\end{theorem}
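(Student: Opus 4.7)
The plan is to begin with the natural embedding $\iota \colon A(\Gamma) \hookrightarrow A(\Gamma^e)$ induced by the full subgraph inclusion $\Gamma \leq \Gamma^e$ (this is a full embedding because, by definition, two vertices of $\Gamma$ are adjacent in $\Gamma^e$ precisely when they commute in $A(\Gamma)$, which happens precisely when they are adjacent in $\Gamma$). Composing gives an initial embedding $\phi_0 := \iota \circ \phi \colon A(\Lambda) \hookrightarrow A(\Gamma^e)$, and the task reduces to modifying $\phi_0$ so that each $\phi_0(v)$ has clique support in $V(\Gamma^e)$.

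The key structural feature of $\Gamma^e$ to exploit is that the conjugate of any vertex of $\Gamma^e$ by any element of $A(\Gamma^e)$ is again a vertex of $\Gamma^e$; equivalently, for every $c \in A(\Gamma)$ and every $s \in V(\Gamma)$, the element $csc^{-1}$ is a vertex of $\Gamma^e$. This observation has an immediate payoff: cyclically reduce $\phi_0(v) = c_v p_v c_v^{-1}$ with $p_v$ cyclically reduced; if $\mathrm{supp}(p_v)$ already spans a clique in $\Gamma$ (hence in $\Gamma^e$), then writing $p_v = s_1^{a_1}\cdots s_k^{a_k}$ with pairwise commuting $s_i$ gives
$$
\phi_0(v) = (c_v s_1 c_v^{-1})^{a_1}\cdots (c_v s_k c_v^{-1})^{a_k},
$$
and the letters $c_v s_i c_v^{-1}$ are vertices of $\Gamma^e$ that still pairwise commute by Lemma \ref{commutative}, so $\mathrm{supp}(\phi_0(v))$ already spans a clique in $\Gamma^e$ and nothing further is required for this $v$.

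The remaining issue is the case where some $p_v$ does not have clique support in $\Gamma$. Here the plan is to use the commutation constraints coming from edges of $\Lambda$: whenever $\{v,w\} \in E(\Lambda)$ we must have $[\phi_0(v),\phi_0(w)] = 1$, and Servatius's centralizer theorem (Lemma \ref{commutative}) forces severe restrictions on how the cyclically reduced parts $p_v, p_w$ can interlock, in particular forcing a compatible join/clique structure on their combined supports. I would then proceed by induction on a complexity measure (for instance the sum of syllable lengths of the $p_v$'s) to rewrite each $\phi_0(v)$ as a product of pairwise commuting vertices of $\Gamma^e$, using conjugation moves inside $A(\Gamma^e)$ that split off non-clique factors into new vertex-conjugates. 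At each inductive step one either reaches a clique-supported representative or peels off a commuting factor that reduces complexity without destroying the image.

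The main obstacle is carrying out this rewriting \emph{simultaneously} and \emph{coherently} across all $v \in V(\Lambda)$: replacing the $\phi_0(v)$'s one at a time risks either breaking a commutation relation $[\phi_0(v),\phi_0(w)] = 1$ or collapsing the kernel and losing injectivity. The delicate part of the argument is therefore a global modification procedure that is adapted to the simultaneous cyclic decompositions of all $\phi_0(v)$, and a final verification—again via Lemma \ref{commutative} applied inside $A(\Gamma^e)$—that the modified map $\psi$ remains an injective homomorphism while now having clique-supported images on every generator.
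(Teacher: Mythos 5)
This statement is not proved in the paper at all: it is quoted verbatim as \cite[Theorem 4.3]{Kim--Koberda13}, one of the main technical results of that paper, and the present article only uses it as a black box. So the relevant question is whether your sketch actually constitutes a proof, and it does not: there is a genuine gap at exactly the point where the real work lies. In the case where some cyclically reduced part $p_v$ does not have clique support in $\Gamma$, you state that Servatius's theorem ``forces severe restrictions,'' that you ``would proceed by induction on a complexity measure,'' and that the ``delicate part'' is a ``global modification procedure'' — but none of this is carried out. Naming the obstacle (coherence of the rewriting across all generators simultaneously, preservation of the relations of $A(\Lambda)$, and preservation of injectivity) is not the same as overcoming it; this is precisely the content of Kim--Koberda's argument, which rests on the pure-factor (join) decomposition of the support of a cyclically reduced word and of its centralizer, and on making compatible choices for all $v\in V(\Lambda)$ at once. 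As written, the hard case is deferred rather than proved.

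There is also a subtler problem in your ``easy'' case. The displayed identity $\phi_0(v) = (c_v s_1 c_v^{-1})^{a_1}\cdots (c_v s_k c_v^{-1})^{a_k}$ is an identity in $A(\Gamma)$; if you reinterpret each factor $c_v s_i c_v^{-1}$ as a single \emph{generator} of $A(\Gamma^e)$, the resulting element of $A(\Gamma^e)$ is not equal to $\phi_0(v)=\iota(\phi(v))$, since the natural evaluation homomorphism $A(\Gamma^e)\to A(\Gamma)$ (sending each vertex of $\Gamma^e$ to the corresponding element of $A(\Gamma)$) is far from injective. So you are in fact defining a new map $\psi$, not observing that $\phi_0$ already has clique support, and you must then re-verify that $\psi$ is a homomorphism (the supports of $\psi(v)$ and $\psi(w)$ must pairwise commute in $A(\Gamma)$ whenever $\{v,w\}\in E(\Lambda)$) and that it is injective (which does follow by composing with the evaluation map and recovering $\phi$). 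This is a standard and repairable point, but conflating the two elements of $A(\Gamma^e)$ obscures exactly the bookkeeping that makes the global, non-clique case difficult.
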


\section{Graph-join \label{graph-join-sec}}
In this section, we prove Proposition \ref{joining-EGP}, which says that, for two finite graphs $\Lambda$ and $\Gamma$ such that there is an embedding $A(\Lambda) \hookrightarrow A(\Gamma)$ satisfying condition (KK), the problem of finding a full embedding $\Lambda \rightarrow \Gamma$, with a certain restriction, is reduced to the corresponding problems for the ``join-components" of $\Lambda$. 
The (graph-){\it join} $\Lambda_1 * \cdots *\Lambda_m$ of graphs $\Lambda_1, \ldots, \Lambda_m$ is the graph obtained from the disjoint union $\Lambda_1 \sqcup \cdots \sqcup \Lambda_m$ by joining the vertices $u$ and $v$ for all $u \in V(\Lambda_i), v \in V(\Lambda_j)$ ($i\neq j$).   
In this article, we say that a graph $\Lambda$ is {\it irreducible} (with respect to join) if $\Lambda$ cannot be the join of two non-empty graphs. 
Any finite graph $\Lambda$ is the join of finitely many irreducible graphs. 
Indeed, this follows from the fact that $\Lambda = \Lambda_1 * \cdots * \Lambda_m$ if and only if $\Lambda^c = \Lambda_1^c \sqcup \cdots \sqcup \Lambda_m^c$. 
This fact also implies the following lemma. 

\begin{lemma}
A finite graph $\Lambda$ is irreducible if and only if $\Lambda^c$ is connected. 
In particular, if $\Lambda$ is an irreducible graph containing at least two vertices, then for any vertex $u \in V(\Lambda)$, there is a vertex $u' \in V(\Lambda)$ such that $u$ and $u'$ are non-adjacent. 
\label{irr-nonad}
\end{lemma}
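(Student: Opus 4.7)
The plan is to leverage the join/disjoint-union duality that the paper has just recorded, namely $\Lambda = \Lambda_1 * \cdots * \Lambda_m$ if and only if $\Lambda^c = \Lambda_1^c \sqcup \cdots \sqcup \Lambda_m^c$. Once this is in hand, the first statement of the lemma is essentially a rephrasing of the definition of connectedness.

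For the forward direction of the biconditional, I would argue by contrapositive: suppose $\Lambda^c$ is disconnected, and write $\Lambda^c = A \sqcup B$ as the disjoint union of two non-empty subgraphs. Passing back to $\Lambda$, the duality yields $\Lambda = A^c * B^c$, where $A^c$ and $B^c$ are each non-empty (since the underlying vertex sets are unchanged). Hence $\Lambda$ is reducible. Conversely, if $\Lambda = \Lambda_1 * \Lambda_2$ with both $\Lambda_i$ non-empty, then $\Lambda^c = \Lambda_1^c \sqcup \Lambda_2^c$, which is disconnected. Combining the two implications gives: $\Lambda$ irreducible $\Longleftrightarrow$ $\Lambda^c$ connected.

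For the ``in particular'' assertion, I would argue as follows. Assume $\Lambda$ is irreducible and $|V(\Lambda)| \geq 2$. By the first part, $\Lambda^c$ is a connected graph with at least two vertices. Fix any $u \in V(\Lambda) = V(\Lambda^c)$. If $u$ were adjacent in $\Lambda$ to every other vertex, then $u$ would be isolated in $\Lambda^c$, so the singleton $\{u\}$ would be a connected component of $\Lambda^c$; this is incompatible with $\Lambda^c$ being connected and containing a second vertex. Therefore there is some $u' \in V(\Lambda)$ with $\{u, u'\} \in E(\Lambda^c)$, which translates to $\{u, u'\} \notin E(\Lambda)$, as required.

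There is no serious obstacle: the proof amounts to unpacking the definitions of join, complement, and connectedness, once the duality displayed in the preceding paragraph of the paper is taken for granted. The only place requiring a hint of care is keeping track of which non-emptiness hypothesis is needed where---specifically, we use $|V(\Lambda)| \geq 2$ in the ``in particular'' clause precisely to rule out the trivial case where $\Lambda^c$ is a single vertex (which is vacuously connected but contains no edges).
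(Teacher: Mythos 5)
Your proof is correct and follows essentially the same route as the paper: both rest on the duality $\Lambda = \Lambda_1 * \cdots * \Lambda_m \Leftrightarrow \Lambda^c = \Lambda_1^c \sqcup \cdots \sqcup \Lambda_m^c$, which the paper invokes for the biconditional and which you spell out explicitly. The only cosmetic difference is in the ``in particular'' clause, where the paper argues directly in $\Lambda$ (a vertex $u$ adjacent to everything gives the decomposition $\Lambda = \{u\} * \check{\Lambda}$, contradicting irreducibility) while you pass to $\Lambda^c$ and observe that $u$ would be isolated there; these are the same observation viewed from opposite sides of the complement.
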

\begin{proof}
Suppose that $\Lambda$ is an irreducible graph containing at least two vertices. 
Pick a vertex $u \in V(\Lambda)$. 
If $u$ does not have a non-adjacent vertex, then we have a decomposition $\Lambda= \{ u \} * \check{\Lambda}$, where $\check{\Lambda}$ is a full subgraph spanned by $V(\Lambda) \setminus \{ u \}$, a contradiction. 
\end{proof}

Besides, the right-angled Artin group on the join,  $A(\Lambda_1 * \cdots *\Lambda_m)$, is isomorphic to the direct product $A(\Lambda_1) \times \dots \times A(\Lambda_m)$.  
For simplicity, if $\psi \colon A(\Lambda) \rightarrow A(\Gamma)$ is a homomorphism, then by $\mathrm{supp}(\psi)$ we denote $\cup_{v \in V(\Lambda)}\mathrm{supp}(\psi(v))$. 

\begin{proposition}
Let $\Lambda$ be the join $\Lambda_1 * \cdots * \Lambda_m$ of finite irreducible graphs $\Lambda_1, \ldots, \Lambda_m$, and let $\Gamma$ be a finite graph. 
Suppose that the following conditions hold: 
\begin{enumerate}
 \item[(1)] There is an embedding $\psi \colon A(\Lambda) \hookrightarrow A(\Gamma)$ satisfying condition (KK). 
 \item[(2)] For each $1 \leq i \leq m$, there is a full embedding $\iota_i \colon \Lambda_i \rightarrow \Gamma$ with $\iota_i(\Lambda_i) \subset \mathrm{supp}(\psi_i)$, where $\psi_i$ is the restriction of $\psi$ to $A(\Lambda_i)$. 
 \label{joining-EGP}
\end{enumerate} 
Then there is a full embedding $\iota \colon \Lambda \rightarrow \Gamma$ with $\iota(\Lambda) \subset  \mathrm{supp}(\psi)$. 
\end{proposition}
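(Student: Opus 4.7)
The plan is to construct $\iota\colon V(\Lambda)\to V(\Gamma)$ by gluing together the given $\iota_i$ on the multi-vertex factors while possibly redefining $\iota_i$ on the single-vertex factors. Since $\Lambda=\Lambda_1*\cdots*\Lambda_m$, every cross-pair $(v,w)$ with $v\in V(\Lambda_i)$, $w\in V(\Lambda_j)$, $i\neq j$, is an edge of $\Lambda$ and there are no non-edges across factors. Thus, once each $\iota_i$ is held to be a full embedding on $\Lambda_i$, the full-embedding condition for the assembled $\iota$ boils down to two things: (a) global injectivity, i.e.\ $\iota_i(\Lambda_i)\cap\iota_j(\Lambda_j)=\emptyset$ for $i\neq j$; and (b) for every such cross-pair, adjacency of $\iota_i(v)$ and $\iota_j(w)$ in $\Gamma$.

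The key observation I would use repeatedly: for $i\neq j$, $v\in V(\Lambda_i)$, $w\in V(\Lambda_j)$, the elements $\psi(v)$ and $\psi(w)$ commute in $A(\Gamma)$ and their supports span cliques by (KK), so Lemma~\ref{commutative} forces $\mathrm{supp}(\psi(v))\cup\mathrm{supp}(\psi(w))$ to lie in a single clique of $\Gamma$. Hence any $x\in\mathrm{supp}(\psi(v))$ and $y\in\mathrm{supp}(\psi(w))$ are equal or adjacent in $\Gamma$; in particular (b) is automatic once (a) is secured.

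For (a), I would first rule out collisions whenever one of the factors involved has at least two vertices. Suppose $|V(\Lambda_i)|\geq 2$ and $\iota_i(v)=\iota_j(w)=:x$ with $i\neq j$. By Lemma~\ref{irr-nonad} (applied to the irreducible $\Lambda_i$), choose $v'\in V(\Lambda_i)$ non-adjacent to $v$; since $\iota_i$ is a full embedding, $\iota_i(v')\neq x$ and $\iota_i(v')$ is non-adjacent to $x$ in $\Gamma$. Applying the key observation to $v'$ and $w$, however, forces $\iota_i(v')$ and $\iota_j(w)=x$ to be equal or adjacent, a contradiction. Note that this argument never used $\iota_j$ being a full embedding; it only needed $\iota_j(w)\in\mathrm{supp}(\psi(w))$. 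Hence multi-vertex images are automatically disjoint from every other $\iota_j(\Lambda_j)$, even after we modify the single-vertex $\iota_j$ below.

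The remaining, and in my view hardest, case is avoiding collisions among the single-vertex factors. Let $S=\{i:\Lambda_i=\{u_i\}\}$; on these we are free to reselect $\iota_i(u_i)$ as any vertex of $\mathrm{supp}(\psi(u_i))$. Iterating the key observation, $K:=\bigcup_{i\in S}\mathrm{supp}(\psi(u_i))$ spans a clique of $\Gamma$; the abelian subgroup of $A(\Gamma)$ generated by $K$ is $\mathbb{Z}^{|K|}$, and the restriction of $\psi$ to $\langle u_i:i\in S\rangle\cong\mathbb{Z}^{|S|}$ embeds into it. Writing the $\psi(u_i)$, $i\in S$, as the columns of a $|K|\times|S|$ integer matrix $M$ of rank $|S|$, some $|S|\times|S|$ submatrix of $M$ is nonsingular; expanding its determinant via the Leibniz formula produces a nonvanishing term, i.e.\ an injection $f\colon S\to K$ with $f(i)\in\mathrm{supp}(\psi(u_i))$. (Equivalently, Hall's marriage theorem applies since the same rank argument yields $|\bigcup_{i\in T}\mathrm{supp}(\psi(u_i))|\geq |T|$ for every $T\subseteq S$.) Setting $\iota_i(u_i):=f(i)$ for $i\in S$ and assembling everything into $\iota$, injectivity holds by the previous paragraph (multi-vertex vs.\ anything) combined with $f$ being injective (among single vertices), while (b) follows from the key observation and (a); within each $\Lambda_i$, $\iota_i$ is already a full embedding. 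This yields the desired full embedding $\iota\colon\Lambda\to\Gamma$ with $\iota(\Lambda)\subset\mathrm{supp}(\psi)$.
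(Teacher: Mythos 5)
Your proposal is correct and follows essentially the same route as the paper: the cross-factor observation that commuting images force supports into a common clique (Lemma \ref{commutative}), the non-adjacent-vertex trick from Lemma \ref{irr-nonad} to rule out collisions involving a factor with at least two vertices, and a rank argument in a free abelian group to place the singleton factors. The one genuine difference is your treatment of the singletons: the paper (Lemma \ref{Abel}) only counts, deducing $n \leq l$ from $\mathbb{Z}^n \hookrightarrow \mathbb{Z}^l$ and then choosing an arbitrary injection of the singleton vertices into $\mathrm{supp}(\psi_0)$, whereas you prove the stronger statement that one can pick a system of distinct representatives $f(i) \in \mathrm{supp}(\psi(u_i))$ via Hall's theorem (or a Leibniz expansion); this buys a conclusion of the form $\iota(v)\in\mathrm{supp}(\psi(v))$ on the singletons, which is more than the proposition asks for but is correct, and the Hall condition does follow from the same rank argument applied to each subset $T$. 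One small imprecision: when ruling out a collision $\iota_i(v)=\iota_j(w)$ you ``apply the key observation to $v'$ and $w$,'' but $\iota_i(v')$ is only guaranteed to lie in $\mathrm{supp}(\psi_i)$, not in $\mathrm{supp}(\psi(v'))$; you should pass to vertices $\bar v\in V(\Lambda_i)$ and $\bar w\in V(\Lambda_j)$ with $\iota_i(v')\in\mathrm{supp}(\psi(\bar v))$ and $\iota_j(w)\in\mathrm{supp}(\psi(\bar w))$, exactly as the paper's Lemma \ref{joining-EGP-pre} does; the conclusion is unaffected.
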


We first prove this proposition in a special case. 

\begin{lemma}
Let $\Lambda_1$ be a finite irreducible graph containing at least two vertices, and let $\Lambda_2$ and $\Gamma$ be finite graphs. 
Suppose that the following conditions hold: 
\begin{enumerate}
 \item[(1)] There is an embedding $\psi \colon A(\Lambda_1 * \Lambda_2) \hookrightarrow A(\Gamma)$ satisfying condition (KK). 
 \item[(2)] For $i=1,2$, there are full embeddings $\iota_i \colon \Lambda_i \rightarrow \Gamma$ with $\iota_i(\Lambda_i) \subset \mathrm{supp}(\psi_i)$, where $\psi_i$ is the restriction of $\psi$ to $A(\Lambda_i)$. 
\end{enumerate} 
Then the map $\iota \colon \Lambda_1 * \Lambda_2 \rightarrow \Gamma$, defined by $\iota(v) = \iota_1(v)$ or $\iota_2(v)$ according to whether $v \in V(\Lambda_1)$ or $v \in V(\Lambda_2)$, is a full embedding with $\iota(\Lambda_1 * \Lambda_2) \subset  \mathrm{supp}(\psi)$. 
 \label{joining-EGP-pre}
\end{lemma}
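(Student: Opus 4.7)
The plan is to combine Lemma \ref{commutative} (Servatius' Centralizer Theorem) with the irreducibility hypothesis on $\Lambda_1$. The image condition $\iota(\Lambda_1 * \Lambda_2) \subset \mathrm{supp}(\psi)$ is immediate from $\iota_i(\Lambda_i) \subset \mathrm{supp}(\psi_i) \subset \mathrm{supp}(\psi)$, and by hypothesis each $\iota_i$ is already a full embedding on $V(\Lambda_i)$. Hence the only thing to check is what happens across the join: for every $v \in V(\Lambda_1)$ and $w \in V(\Lambda_2)$, one needs $\iota_1(v)$ and $\iota_2(w)$ to be distinct and adjacent in $\Gamma$. This will simultaneously supply the join-edges required for $\iota$ to be a graph homomorphism and guarantee injectivity across the two factors.

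For the adjacency-up-to-equality half, I would argue as follows. For any $u_1 \in V(\Lambda_1)$ and $u_2 \in V(\Lambda_2)$, the generators $u_1, u_2$ commute in $A(\Lambda_1 * \Lambda_2)$, so $\psi(u_1)$ and $\psi(u_2)$ commute in $A(\Gamma)$; by condition (KK) both of their supports span cliques, and Lemma \ref{commutative} therefore forces $\mathrm{supp}(\psi(u_1)) \cup \mathrm{supp}(\psi(u_2))$ to be contained in a single clique of $\Gamma$. Running $u_1,u_2$ over all of $V(\Lambda_1) \times V(\Lambda_2)$ yields the following dichotomy: for every $x \in \mathrm{supp}(\psi_1)$ and every $y \in \mathrm{supp}(\psi_2)$, either $x = y$ or $\{x,y\} \in E(\Gamma)$. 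Since $\iota_1(v) \in \mathrm{supp}(\psi_1)$ and $\iota_2(w) \in \mathrm{supp}(\psi_2)$, this dichotomy applies to them.

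The crux is to rule out equality. Suppose for contradiction that $\iota_1(v) = \iota_2(w) =: x$. Because $\Lambda_1$ is irreducible with at least two vertices, Lemma \ref{irr-nonad} supplies a vertex $v' \in V(\Lambda_1)$ not adjacent to $v$. Then $\iota_1(v') \in \mathrm{supp}(\psi_1)$, and because $\iota_1$ is a full embedding, $\iota_1(v')$ is both distinct from $\iota_1(v) = x$ and non-adjacent to it in $\Gamma$. But the dichotomy applied to $\iota_1(v') \in \mathrm{supp}(\psi_1)$ and $x \in \mathrm{supp}(\psi_2)$ insists that these two vertices be equal or adjacent — contradiction. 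Hence $\iota_1(v) \neq \iota_2(w)$, and the dichotomy then gives the needed adjacency.

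The only non-routine step is precisely this use of irreducibility to prevent $\iota_1(v)$ and $\iota_2(w)$ from collapsing to a common vertex; the hypothesis that $\Lambda_1$ has at least two vertices (so that a non-adjacent pair exists) is exactly what powers the contradiction, and everything else is an unpacking of definitions and a direct application of Lemma \ref{commutative}.
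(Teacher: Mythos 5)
Your proposal is correct and follows essentially the same route as the paper: both establish via Lemma \ref{commutative} that any vertex of $\mathrm{supp}(\psi_1)$ and any vertex of $\mathrm{supp}(\psi_2)$ are identical or adjacent, and both invoke Lemma \ref{irr-nonad} to produce a non-adjacent pair in $\iota_1(\Lambda_1)$ that rules out a coincidence $\iota_1(v)=\iota_2(w)$. The only difference is presentational (you argue the distinctness by contradiction, the paper argues it directly), so there is nothing substantive to add.
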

\begin{proof}
We have only to prove: (i) $\iota_1 (\Lambda_1) \cap \iota_2(\Lambda_2) = \emptyset$ and (ii) $\forall u \in V(\Lambda_1),  \forall v \in V(\Lambda_2)$, $\iota_1(u)$ and $\iota_2(v)$ are adjacent in $\Gamma$. 
In fact (i) and (ii) imply that the map $\iota \colon \Lambda_1 * \Lambda_2 \rightarrow \Gamma$ is a full embedding. 
Moreover, the assumptions $\iota_i(\Lambda_i) \subset \mathrm{supp}(\psi_i)$ imply that the full embedding $\iota$ satisfies the desired property that $\iota(\Lambda_1 * \Lambda_2) \subset \mathrm{supp}(\psi)$. 

(i) Pick $u_1 \in V(\Lambda_1)$ and $u_2 \in V(\Lambda_2)$. 
Since $\Lambda_1$ is irreducible and has at least two vertices, $\Lambda_1$ has a vertex $u_1'$ which is non-adjacent to $u_1$ in $\Lambda_1$ by Lemma \ref{irr-nonad}. 
Since $u_1$ is not adjacent to $u_1'$ in $\Lambda_1$, and since $\iota_1 \colon \Lambda_1 \rightarrow \Gamma$ is full, $\iota_1(u_1)$ is not adjacent to $\iota_1(u_1')$ in $\Gamma$. 
On the other hand, we can prove that $\iota_2(u_2)$ is either identical with $\iota_1(u_1')$ or adjacent to $\iota_1(u_1')$ in $\Gamma$ as follows (and so $\iota_1(u_1) \neq \iota_2(u_2)$ in any case).  
By the assumptions that $\iota_i(\Lambda_i) \subset \mathrm{supp}(\psi_i)$ ($i= 1,2$), there are vertices $\bar{u}_1' \in V(\Lambda_1)$ and $\bar{u}_2 \in V(\Lambda_2)$ such that $\iota_1(u_1') \in \mathrm{supp}(\psi(\bar{u}_1'))$ and $\iota_2(u_2) \in \mathrm{supp}(\psi(\bar{u}_2))$. 
Moreover, since $\Lambda_1$ and $\Lambda_2$ are joined in $\Lambda_1 * \Lambda_2$, the image $\psi(\bar{u}_2)$ is commutative with $\psi(\bar{u}_1')$, and therefore $\mathrm{supp}(\psi(\bar{u}_2))$ and $\mathrm{supp}(\psi(\bar{u}_1'))$ are contained in a single clique by (KK) condition and Lemma \ref{commutative}. 
Thus $\iota_2(u_2)$ is either adjacent to $\iota_1(u_1')$ or identical with $\iota_1(u_1')$ in $\Gamma$. 

(ii) Pick $u_1 \in V(\Lambda_1)$ and $u_2 \in V(\Lambda_2)$. 
There are vertices $\bar{u}_1 \in V(\Lambda_1)$ and $\bar{u}_2 \in V(\Lambda_2)$ such that $\iota_1(u_1) \in \mathrm{supp}(\psi(\bar{u}_1))$ and $\iota_2(u_2) \in \mathrm{supp}(\psi(\bar{u}_2))$. 
Since $\Lambda_1$ and $\Lambda_2$ are joined in $\Lambda_1 * \Lambda_2$, $\psi(\bar{u}_1)$ and $\psi(\bar{u}_2)$ are commutative. 
Hence, $\mathrm{supp}(\bar{u}_1)$ and $\mathrm{supp}(\bar{u}_2)$ are contained in a single clique in $\Gamma$ by Lemma \ref{commutative}. 
Thus, $\iota_1(u_1)$ and $\iota_2(u_2)$ are adjacent in $\Gamma$. 
\end{proof}

By $K_n$, we denote the complete graph on $n$ vertices. 
The right-angled Artin group on $K_n$, $A(K_n)$, is isomorphic to $\mathbb{Z}^n$. 

\begin{lemma}
Let $\Gamma$ be a finite graph. 
Suppose that $\psi \colon A(K_n) \rightarrow A(\Gamma)$ is an embedding satisfying condition (KK). 
Then there is a full embedding $\iota \colon K_n \rightarrow \Gamma$ with $\iota (K_n) \subset \mathrm{supp}(\psi)$. 
\label{Abel}
\end{lemma}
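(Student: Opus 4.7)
The plan is to show that $\mathrm{supp}(\psi) := \bigcup_{i=1}^{n} \mathrm{supp}(\psi(v_i))$ is a clique of $\Gamma$ of size at least $n$, where $v_1, \ldots, v_n$ are the vertices of $K_n$. Once this is done, any injection $\iota$ of $V(K_n)$ into this clique is automatically a full embedding $K_n \rightarrow \Gamma$ with $\iota(K_n) \subset \mathrm{supp}(\psi)$, since $K_n$ has no non-adjacent pairs.

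First I would verify that $\mathrm{supp}(\psi)$ spans a clique. The vertices $v_i, v_j$ are adjacent in $K_n$, so $\psi(v_i)$ and $\psi(v_j)$ commute, and each $\mathrm{supp}(\psi(v_i))$ spans a clique by (KK). Hence by Lemma \ref{commutative}, $\mathrm{supp}(\psi(v_i)) \cup \mathrm{supp}(\psi(v_j))$ is contained in a single clique of $\Gamma$, so in particular any $a \in \mathrm{supp}(\psi(v_i))$ and $b \in \mathrm{supp}(\psi(v_j))$ are adjacent or equal. Applying this to all pairs $(i,j)$ shows that any two vertices in $\mathrm{supp}(\psi)$ are adjacent or equal, so $\mathrm{supp}(\psi)$ spans a clique $K$ of $\Gamma$.

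Next I would use the injectivity of $\psi$ to bound $|V(K)|$. Since every letter of every $\psi(v_i)$ belongs to $V(K) \cup V(K)^{-1}$, the image $\psi(A(K_n))$ lies inside the subgroup generated by $V(K)$, which is $A(K) \cong \mathbb{Z}^{|V(K)|}$. Combined with $A(K_n) \cong \mathbb{Z}^n$ and the injectivity of $\psi$, this gives an embedding $\mathbb{Z}^n \hookrightarrow \mathbb{Z}^{|V(K)|}$, forcing $|V(K)| \geq n$.

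Finally, choose any $n$ distinct vertices of $K$ and let $\iota$ be any bijection from $V(K_n)$ onto this subset. Then $\iota$ has image in $\mathrm{supp}(\psi)$, is injective, sends adjacent vertices to adjacent vertices (since its image spans a clique), and vacuously sends non-adjacent vertices to non-adjacent vertices (there are none). I expect the only mildly delicate step to be the clique-of-cliques argument in the first paragraph above, but it follows immediately from Lemma \ref{commutative} applied pairwise together with the (KK) hypothesis.
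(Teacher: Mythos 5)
Your proposal is correct and follows essentially the same route as the paper: show via condition (KK) and Lemma \ref{commutative} that $\mathrm{supp}(\psi)$ spans a clique on $l$ vertices, deduce $n \leq l$ from the embedding $\mathbb{Z}^n \hookrightarrow \mathbb{Z}^l$, and then take any injection of $V(K_n)$ into that clique. Your write-up merely spells out the pairwise clique argument and the choice of $\iota$ in more detail than the paper does.
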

\begin{proof}
Since $\psi$ satisfies condition (KK) and since $K_n$ is complete, $\mathrm{supp}(\psi)$ spans a clique on $l$ vertices of $\Gamma$ by Lemma \ref{commutative}. 
Hence, we have an embedding $A(K_n) \cong \mathbb{Z}^n \hookrightarrow \mathbb{Z}^l \hookrightarrow A(\Gamma)$. 
This implies $n \leq l$, and so we obtain an injective map $V(K_n) \rightarrow \mathrm{supp}(\psi)$, which induces a full embedding $\iota \colon K_n \rightarrow \Gamma$ with $\iota (K_n) \subset \mathrm{supp}(\psi)$. 
\end{proof}

\begin{proof}[Proof of Proposition \ref{joining-EGP}]
We may assume that each of $\Lambda_1, \ldots \Lambda_n$ is a singleton graph and each of $\Lambda_{n+1}, \ldots, \Lambda_{m}$ has at least two vertices. 
Put $\Lambda_0:= \Lambda_1 * \cdots * \Lambda_n$. 
Then $\Lambda_0$ is isomorphic to the complete graph on $n$ vertices, $K_n$. 
In addition, we can decompose $\Lambda$ into $\Lambda_0 * (*_{i= n+1}^m \Lambda_i)$. 
Hence, we have $A(\Lambda) = A(\Lambda_0) \times A(\Lambda_{n+1}) \times \cdots \times A(\Lambda_m)$. 
By restricting $\psi$ to the abelian factor $A(\Lambda_0)$, we obtain an embedding $\psi_0 \colon A(\Lambda_0) \hookrightarrow A(\Gamma)$ satisfying condition (KK). 
Therefore, by Lemma \ref{Abel}, we obtain a full embedding $\iota_0: \Lambda_0 \rightarrow \Gamma$ with $\iota_0(\Lambda_0) \subset \mathrm{supp}(\psi_0)$. 
Consider the family of full embeddings $\iota_0, \iota_{n+1}, \ldots, \iota_{m}$. 
Since each of $\Lambda_{n+1}, \ldots, \Lambda_{m}$ is irreducible and has at least two vertices, by repeatedly applying Lemma \ref{joining-EGP-pre}, we obtain the desired full embedding $\iota \colon \Lambda_0 * \Lambda_{n+1} * \cdots * \Lambda_{m} \rightarrow \Gamma$. 
\end{proof}

\section{Proof of Theorem \ref{linear-forest_lemma} \label{the-linear-forest-lemma-sec}}
In this section we prove Theorem \ref{linear-forest_lemma}. 
We first rephrase Theorem \ref{linear-forest_lemma} in terms of join. 
Recall that $(\Lambda_1 \sqcup \cdots \sqcup \Lambda_m)^c = \Lambda_1^c * \cdots * \Lambda_m^c$. 
Hence, if $\Lambda$ is the complement of a linear forest, namely, if $\Lambda$ is the complement of the disjoin union of finitely many path graphs, then $\Lambda$ is the join of the complements of finitely many path graphs. 

\begin{theorem}[rephrased]
Let $\Lambda$ be the join of the complements of finitely many path graphs and $\Gamma$ a finite graph. 
If $A(\Lambda) \hookrightarrow A(\Gamma)$, then $\Lambda \leq \Gamma^e$. 
\label{linear-forest_lemma_reph}
\end{theorem}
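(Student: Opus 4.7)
The plan is to combine Kim--Koberda's embedding theorem with the join reduction of Section \ref{graph-join-sec} in order to reduce Theorem \ref{linear-forest_lemma_reph} to the irreducible case, namely $\Lambda = P_n^c$. First, apply Theorem \ref{Kim--Koberda} to the given embedding $A(\Lambda)\hookrightarrow A(\Gamma)$ to obtain an embedding $\psi\colon A(\Lambda)\hookrightarrow A(\Gamma^e)$ satisfying condition (KK). By hypothesis $\Lambda = P_{n_1}^c * \cdots * P_{n_m}^c$, and each factor $P_{n_i}^c$ is irreducible since its complement $P_{n_i}$ is connected (Lemma \ref{irr-nonad}). The proof of Proposition \ref{joining-EGP} goes through verbatim when the target graph is the (possibly infinite) extension graph $\Gamma^e$, because every support involved remains finite and Lemma \ref{commutative} applies unchanged. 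Consequently it suffices to prove the irreducible case: for every $n\ge 1$ and every embedding $\psi_0\colon A(P_n^c)\hookrightarrow A(\Gamma^e)$ satisfying (KK), there is a full embedding $\iota_0\colon P_n^c\to\Gamma^e$ with $\iota_0(P_n^c)\subset \mathrm{supp}(\psi_0)$.

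Next, I would rephrase this irreducible case as a combinatorial selection problem. Label the vertices of $P_n^c$ by $v_1,\ldots,v_n$ along the underlying path and set $S_i:=\mathrm{supp}(\psi_0(v_i))$, which is a clique in $\Gamma^e$ by (KK). Since $\psi_0(v_i)$ and $\psi_0(v_j)$ commute exactly when $|i-j|\ge 2$, Lemma \ref{commutative} yields: $S_i\cup S_j$ lies in a single clique of $\Gamma^e$ when $|i-j|\ge 2$, but fails to do so when $|i-j|=1$. The existence of the desired full embedding is therefore equivalent to selecting representatives $x_i\in S_i$ that are pairwise distinct and such that $x_i$ and $x_{i+1}$ are non-adjacent in $\Gamma^e$ for every $i$; adjacency of $x_i,x_j$ for $|i-j|\ge 2$ is then automatic from the clique condition.

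I would attack this selection problem by induction on $n$. The cases $n=1,2$ are immediate: $n=1$ is trivial, and for $n=2$ the non-commutativity of $\psi_0(v_1),\psi_0(v_2)$ together with Lemma \ref{commutative} directly produces a non-adjacent pair $(x_1,x_2)\in S_1\times S_2$. For the inductive step I would apply the hypothesis to the restriction of $\psi_0$ to $A(P_{n-1}^c)$ (on the vertices $v_1,\ldots,v_{n-1}$) to obtain $x_1,\ldots,x_{n-1}$, and then try to select $x_n\in S_n$ that is non-adjacent to $x_{n-1}$ and distinct from $x_1,\ldots,x_{n-1}$. The main obstacle is precisely this extension step: the specific $x_{n-1}$ produced by the induction may happen to be adjacent to every element of $S_n$, so a naive induction breaks down. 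To circumvent this I plan either to strengthen the inductive hypothesis so that one controls the admissible set of endpoints $x_{n-1}\in S_{n-1}$, or to run the induction simultaneously from both ends of the path, exploiting the rigidity imposed by the cliques $S_i\cup S_j$ for $|i-j|\ge 2$ to rule out pathological configurations in $\Gamma^e$ and thereby produce a compatible global choice.
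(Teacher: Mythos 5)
Your reduction to the irreducible case is essentially the paper's: apply Theorem \ref{Kim--Koberda} to get a (KK)-embedding into $A(\Gamma^e)$, then use the join decomposition $\Lambda = P_{n_1}^c * \cdots * P_{n_m}^c$ and Proposition \ref{joining-EGP} (the paper avoids your worry about $\Gamma^e$ being infinite by first passing to the finite full subgraph spanned by $\mathrm{supp}(\psi)$, but that is cosmetic). The problem is that the entire mathematical content of the theorem lives in the irreducible case, and there your proposal stops exactly where the difficulty begins. You correctly identify that a naive induction on $n$ fails because the representative $x_{n-1}$ handed to you by the inductive hypothesis may be adjacent to all of $S_n$, but you then offer only two unexecuted strategies (``strengthen the inductive hypothesis'' or ``induct from both ends'') with no mechanism for deriving a contradiction in the bad case. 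The missing idea is the one the paper supplies in Lemma \ref{one-component-anti-path}: assuming no valid sequence of representatives exists, one defines $Y^{(1)}=V(C_1)$ and $Y^{(i)}=\{y\in V(C_i)\mid \exists x\in Y^{(i-1)},\ \{x,y\}\notin E(\Gamma)\}$ (precisely the ``admissible endpoints'' your first strategy gestures at), proves via Claims \ref{claim-1} and \ref{claim-2} that every vertex of $C_n$ commutes with everything in $\cup_i Y^{(i)}$, and then rewrites $\psi(v_1)^{\psi(v_2)\cdots\psi(v_{n-1})}$ as a word supported in $\cup_i Y^{(i)}$ by successively cancelling letters lying outside the $Y^{(i)}$. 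This exhibits the nontrivial commutator $[(v_1)^{v_2\cdots v_{n-1}},v_n]$ in $\ker\psi$, contradicting injectivity. Without this (or an equivalent) argument, the selection problem is simply unsolved; note also that you assert but do not arrange the pairwise distinctness of the $x_i$, which the paper must argue separately inside the proof of Claim \ref{claim-2}.

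A second, independent gap: your claimed equivalence between ``there is a full embedding with image in $\mathrm{supp}(\psi_0)$'' and ``one can select $x_i\in S_i$ with the stated adjacency pattern'' is only an implication in one direction, and the stronger selection statement is exactly what fails to be accessible when $n=3$ (the paper excludes $n=3$ from Lemma \ref{one-component-anti-path}; its proof needs $n\ge 4$ in Claim \ref{claim-2}). The paper handles $P_3^c$ by a completely different route (Lemma \ref{one-component-anti-path-3}), showing that if $P_3^c\not\le\Gamma$ then $\Gamma$ is a join of edgeless graphs, so $A(\Gamma)$ is a direct product of free groups, and then invoking Lemma \ref{Kim--Koberda-P_3^c} on the centerless group $A(P_3^c)\cong\mathbb{Z}^2*\mathbb{Z}$ to rule out an embedding. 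Your uniform induction over all $n\ge 1$ does not account for this case at all.
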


To obtain a full embedding $\Lambda \rightarrow \Gamma^e$ in the assertion above, we consider the join-component, the complement $P_n^c$ of the path graph $P_n$ on $n$ vertices. 

\begin{lemma}
Let $n$ be a positive integer other than $3$ and $\Gamma$ a finite graph. 
Suppose that $\psi \colon A(P_n^c) \rightarrow A(\Gamma)$ is an embedding satisfying condition (KK). 
Then there is a full embedding $\iota \colon P_n^c \rightarrow \Gamma$ with $\iota(v) \in \mathrm{supp}(\psi(v)) \ (\forall v \in V(P_n^c))$. 
In particular, $\iota(P_n^c) \subset \mathrm{supp}(\psi)$. 
\label{one-component-anti-path}
\end{lemma}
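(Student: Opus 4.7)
The plan is to split into the cases $n=1$, $n=2$, and $n\ge 4$, writing $S_i:=\mathrm{supp}(\psi(v_i))$ throughout. First I would record two standing facts coming from condition (KK) and Lemma~\ref{commutative}: for $|i-j|\ge 2$, the commutativity of $\psi(v_i),\psi(v_j)$ (forced by $v_i\sim v_j$ in $P_n^c$) implies $S_i\cup S_j$ spans a clique in $\Gamma$; for $|i-j|=1$, injectivity of $\psi$ together with $v_i\not\sim v_j$ in $P_n^c$ implies $\psi(v_i),\psi(v_j)$ do not commute, so $S_i\cup S_j$ does \emph{not} span a clique. The first fact in particular makes any two distinct representatives $u_i\in S_i,u_j\in S_j$ with $|i-j|\ge 2$ automatically adjacent in $\Gamma$.

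The cases $n=1$ and $n=2$ are then immediate: for $n=1$ pick any vertex of $S_1$; for $n=2$ the second fact directly furnishes a non-adjacent pair $(u_1,u_2)\in S_1\times S_2$ serving as $\iota$.

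For the main case $n\ge 4$ I would use the following key structural observation. Since any two odd-indexed vertices of $P_n^c$ differ by at least $2$ and hence are adjacent, iterating the first fact shows that $K_{\mathrm{odd}}:=\bigcup_{i\text{ odd}}S_i$ spans a clique in $\Gamma$; similarly $K_{\mathrm{even}}:=\bigcup_{i\text{ even}}S_i$ spans a clique. This reduces the problem to choosing distinct representatives $u_i\in S_i$ such that every consecutive pair $(u_i,u_{i+1})$ is non-adjacent in $\Gamma$, since adjacency of non-consecutive pairs is then automatic from the clique property. I would proceed by induction on $n\ge 4$ with base case $n=4$; for the inductive step ($n\ge 5$), I would restrict $\psi$ to the subgroup $A(P_{n-1}^c)=\langle v_1,\dots,v_{n-1}\rangle$ (which still satisfies (KK) and is injective), apply the inductive hypothesis, and then extend to $u_n\in S_n$ non-adjacent to $u_{n-1}$ and distinct from the earlier choices.

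The hard part will be the base case $n=4$, where all three consecutive non-adjacencies must be arranged simultaneously while keeping four vertices distinct. The principal obstacle is the ``split-support'' pathology responsible for the counterexample at $n=3$: it may happen that $S_2$ decomposes into two subsets, one whose vertices are non-adjacent only to some vertex of $S_1$ and the other only to some vertex of $S_3$, with no common witness $u_2$ non-adjacent to vertices of both $S_1$ and $S_3$. For $n\ge 4$ I expect to rule this out by exploiting the additional clique relation $S_2\cup S_4$ combined with the injectivity of $\psi$: in the split configuration one factors $\psi(v_2)$ as a product of two commuting pieces that individually centralize $\psi(v_1)$ and $\psi(v_3)$, and then combines with suitable powers of $\psi(v_4)$ (available precisely because $n\ge 4$) to exhibit a non-trivial element of $\ker\psi$, contradicting injectivity. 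This injectivity-based argument is the technical heart of the proof and precisely explains the exclusion of $n=3$.
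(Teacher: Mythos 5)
Your treatment of $n=1$ and $n=2$ matches the paper's, and your two ``standing facts'' are exactly the paper's Claim~4.3 plus the non-commutativity of consecutive images. But for $n\ge 4$ the proposal has a genuine gap, in two places. First, the inductive step ($n\ge 5$) does not work as stated: the inductive hypothesis hands you a \emph{specific} representative $u_{n-1}\in S_{n-1}$, and the fact that $S_{n-1}\cup S_n$ fails to span a clique only guarantees \emph{some} non-adjacent pair $(x,y)\in S_{n-1}\times S_n$ --- it does not guarantee that \emph{your} $u_{n-1}$ has a non-neighbour in $S_n$. Re-choosing $u_{n-1}:=x$ then breaks the non-adjacency with $u_{n-2}$, and so on back down the chain. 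This global coherence problem is the entire difficulty of the lemma, and it is not confined to the base case $n=4$ as you suggest; it recurs at every step. Second, your proposed resolution for $n=4$ (``factor $\psi(v_2)$ as a product of two commuting pieces that individually centralize $\psi(v_1)$ and $\psi(v_3)$, and combine with powers of $\psi(v_4)$'') is only a heuristic: it is not clear that such a factorization exists, nor which element of $\ker\psi$ it would produce, so the ``technical heart'' you point to is not actually supplied.

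For comparison, the paper attacks the coherence problem head-on and uniformly for all $n\ge 4$: assuming no chain of mutually distinct representatives $y^{(1)},\dots,y^{(n)}$ with consecutive non-adjacencies exists, it inductively defines sets $Y^{(i)}\subset V(C_i)$ of vertices reachable by such partial chains, proves (Claim~4.5) that every vertex of $\bigcup_i Y^{(i)}$ commutes with every vertex of $C_n$, and then rewrites the word representing $\psi(v_1)^{\psi(v_2)\cdots\psi(v_{n-1})}$ so that it only involves vertices from the $Y^{(i)}$'s. This exhibits the explicit non-trivial element $[(v_1)^{v_2\cdots v_{n-1}},v_n]$ of $\ker\psi$, contradicting injectivity. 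Your observation that the odd-indexed and even-indexed supports each span a clique is correct but is not enough to substitute for this construction. To complete your proof you would need to carry out something equivalent to the $Y^{(i)}$/word-rewriting argument (or find another concrete kernel element), both in your base case and in your inductive step.
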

\begin{proof}
Let $\{v_1, v_2, \ldots, v_n \}$ be the vertices of $P_n^c$ labelled as illustrated in Figure \ref{label_P_n_c}. 
\begin{figure}
\centering
\includegraphics[width=45mm,bb=9 9 358 434]{./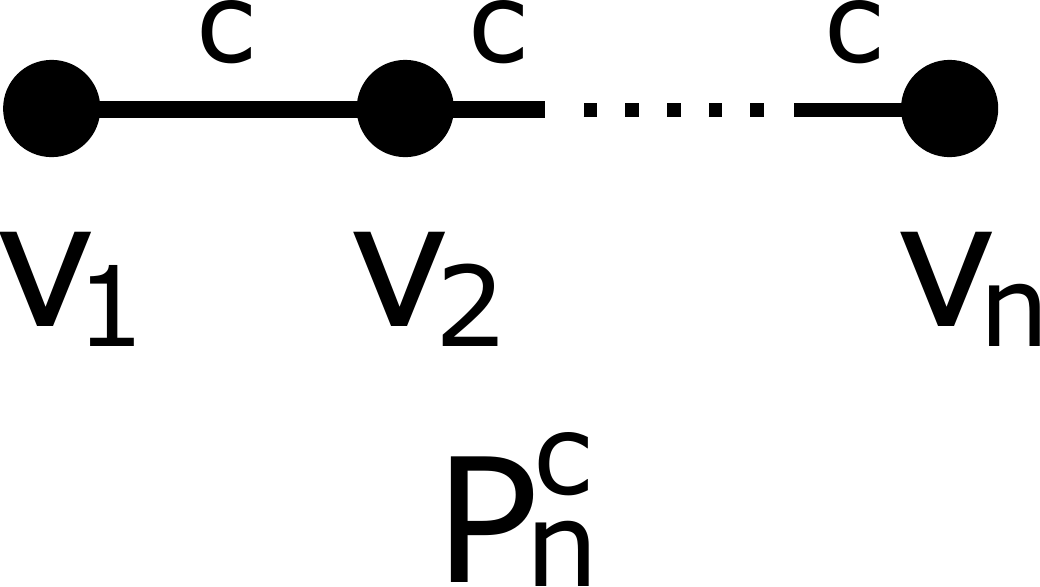}
\caption{This picture illustrates $P_n^c$. 
Real lines with the characters $c$ represent the edges in $P_n = (P_n^c)^c$, each of which joins non-adjacent vertices in $P_n^c$. In this picture, any two distinct vertices not joined by a line are adjacent in $P_n^c$. }
\label{label_P_n_c}
\end{figure}

The assertion is trivial in the case where $n=1$. 
Therefore, we may assume $n = 2$ or $n \geq 4$. 
Suppose $n=2$. 
Then $P_2^c$ consists of two vertices $v_1 , v_2$. 
If there is no full embedding $\iota \colon P_2^c \rightarrow \Gamma$ with $\iota(v_1) \in \mathrm{supp}(\psi(v_1))$ and $\iota(v_2) \in \mathrm{supp}(\psi(v_2))$, then $\mathrm{supp}(\psi(v_1))$ and $\mathrm{supp}(\psi(v_2))$ do not have distinct vertices $u_1 \in \mathrm{supp}(\psi(v_1))$ and $u_2 \in \mathrm{supp}(\psi(v_2))$ such that $\{ u_1 , u_2 \} \not\in E(\Gamma)$. 
Hence the supports, $\mathrm{supp}(\psi(v_1))$ and $\mathrm{supp}(\psi(v_2))$, are contained in a single clique in $\Gamma$, and so $\psi(v_1)$ and $\psi(v_2)$ are commutative in $A(\Gamma)$. 
This implies that a non-trivial element $[v_1, v_2]:= v_1 v_2 v_1^{-1} v_2^{-1}$ of $A(P_2^c)=F_2$ is an element of the kernel of $\psi$, a contradiction. 

We now assume $n \geq 4$. 
By $C_{i}$, we denote the clique in $\Gamma$ spanned by $\mathrm{supp}(\psi(v_i))$. 
Since $\psi(v_i)$ and $\psi(v_j)$ are commutative when  $|i-j| > 1$, we obtain the following claim by Lemma \ref{commutative}. 

\begin{claim}
If $|i -j| > 1$, then any vertex of $C_i$ and any vertex of $C_j$ are either identical or adjacent in $\Gamma$. 
\label{claim-1}
\end{claim}

If $\Gamma$ has a sequence of mutually distinct vertices $y^{(1)}, y^{(2)}, \ldots, y^{(n)}$ such that $y^{(i)} \in V(C_i)$ and that $y^{(i-1)}$ and $y^{(i)}$ are non-adjacent, then the map $\iota \colon P_n^c \rightarrow \Gamma$ defined by $\iota(v_i):= y^{(i)} \ (1 \leq i \leq n)$ determines an embedding $P_n^c \rightarrow \Gamma$ by Claim \ref{claim-1}. 
Since $y^{(i-1)}$ and $y^{(i)}$ are non-adjacent, $\iota \colon P_n^c \rightarrow \Gamma$ is a full embedding. 
Therefore we have only to prove that $\Gamma$ has a sequence of mutually distinct vertices $y^{(1)}, y^{(2)}, \ldots, y^{(n)}$ such that $y^{(i)} \in V(C_i)$ and that $y^{(i-1)}$ and $y^{(i)}$ are non-adjacent. 

Suppose, on the contrary, that 
\begin{itemize}
 \item[($**$)] the graph $\Gamma$ does not have a sequence of mutually distinct vertices $y^{(1)}, y^{(2)}, \ldots, y^{(n)}$ such that $y^{(i)} \in V(C_i)$ and that $y^{(i-1)}$ and $y^{(i)}$ are non-adjacent. 
\end{itemize} 
To deduce a contradiction, we will prove that the commutator $[(v_1)^{v_2 v_3\cdots v_{n-1}}, v_n]$ is a non-trivial element of the kernel of $\psi$. 
We first observe that $[(v_1)^{v_2 v_3\cdots v_{n-1}}, v_n]$ is non-trivial in $A(P_n^c)$. 
Since the preceding letter and succeeding letter of each letter $v^{\epsilon}$ in the word $[(v_1)^{v_2 v_3\cdots v_{n-1}}, v_n]$ are not commutative with $v^{\epsilon}$, the word $[(v_1)^{v_2 v_3\cdots v_{n-1}}, v_n]$ does not admit reduction in the sense of Lemma \ref{reduced}. 
This implies that the word $[(v_1)^{v_2 v_3\cdots v_{n-1}}, v_n]$ is reduced and a non-trivial element in $A(P_n^c)$. 
Thus the remaining task is to show that $[(v_1)^{v_2 v_3\cdots v_{n-1}}, v_n] \in \mathrm{ker}\psi$. 
To this end, it is enough to prove that the element ${\psi(v_1)}^{\psi(v_2)  \psi(v_3) \cdots \psi(v_{n-1})}$ can be represented as a word consisting of vertices of $C_1, \ldots, C_{n-1}$, each of which is commutative with all of the vertices of $C_n$. 
We first inductively define the family $Y^{(1)}, Y^{(2)}, \ldots, Y^{(n-1)}$ of (possibly empty) subsets of $V(\Gamma)$ as follows: 

\begin{enumerate}
 \item[(i)] $Y^{(1)}:= V(C_1)$. 
 \item[(ii)] Suppose that $Y^{(i-1)}$ is defined. 
 Then we set 
 $$Y^{(i)}:= \{ y \in V(C_i) \ | \ \exists x_y \in Y^{(i-1)} \mbox{ such that } \{ x_y , y \} \not\in E(\Gamma) \}. $$ 
\end{enumerate}

With regard to this family $Y^{(1)}, Y^{(2)}, \ldots, Y^{(n-1)}$, we claim that: 

\begin{claim}
Any vertex of $C_n$ and any vertex in $\cup_{i=1}^{n-1} Y^{(i)}$ are either identical or adjacent in $\Gamma$. 
\label{claim-2}
\end{claim}
\begin{proof}[Proof of Claim \ref{claim-2}]
Note that any vertex of $C_n$ and any vertex in $\cup_{i=1}^{n-2} Y^{(i)}$ are either identical or  adjacent in $\Gamma$ by Claim \ref{claim-1}. 
So we have to show that if $Y^{(n-1)} \neq \emptyset$, any element $y^{(n-1)} \in Y^{(n-1)}$ is commutative with all vertices of $C_n$. 
By the construction of $Y^{(n-1)}$, there is an element $y^{(n-2)} \in Y^{(n-2)}$ which is not commutative with $y^{(n-1)}$. 
By repeating this argument, we can find a sequence $y^{(1)}, \ldots, y^{(n-1)}$ such that $y^{(i)} \in Y^{(i)}$, and that $y^{(i-1)}$ and $y^{(i)}$ are not commutative ($2 \leq i \leq n-1$). 
Suppose, on the contrary, that there is an element $y^{(n)} \in V(C_n)$, which is not commutative with $y^{(n-1)}$. 
Then we can observe that $y^{(i)} \neq y^{(j)}$ if $i < j$ as follows. 
We first consider the case where $i=1$ and $2 \leq j \leq n-1$. 
Notice that the element $y^{(1)}$ is either identical with $y^{(j+1)}$ or adjacent to $y^{(j+1)}$ by Claim \ref{claim-1}. 
On the other hand, the element $y^{(j)}$ is non-adjacent to $y^{(j+1)}$. 
Hence, we obtain that $y^{(1)} \neq y^{(j)}$. 
We next consider the case where $i=1$ and $j=n$. 
Since $n\geq 4$, we have $n-1 > 2$. 
Although the element $y^{(n)}$ is non-adjacent to $y^{(n-1)}$, the element $y^{(1)}$ must be adjacent to $y^{(n-1)}$ by Claim \ref{claim-1} and the case where $i=1$ and $j=n-1$. 
Therefore $y^{(1)} \neq y^{(n)}$. 
In case $i \geq 2$, the element $y^{(i)}$ is non-adjacent to $y^{(i-1)}$, but the element $y^{(j)}$ is either adjacent to $y^{(i-1)}$ or identical with $y^{(i-1)}$. 
So $y^{(i)} \neq y^{(j)}$. 
Thus $y^{(i)} \neq y^{(j)}$ if $i < j$, and therefore the sequence $y^{(1)}, \dots, y^{(n)}$ violates our assumption ($**$). 
\end{proof}

Fix reduced words $W_i$ in $V(C_i)$ representing $\psi(v_i)$ ($1 \leq i \leq n$). 
If two given words $w_1, w_2$ represent the same element in $A(\Gamma)$, then we denote by $w_1 = w_2$. 
If $w_1, w_2$ are identical as words, then we denote by $w_1 \equiv w_2$. 
We inductively construct words $\check{W}_1, \ldots, \check{W}_{n-1}$ satisfying the following conditions. 
\begin{enumerate}
 \item[(W-$1$)] $\check{W}_1$ is a word in $Y^{(1)}$. Namely, the word $\check{W}_1$ consists of the vertices in $Y^{(1)}$. 
 \item[(W-$i$)] $\check{W}_i$ is a word in $Y^{(i)}$. Moreover, 
 $$\check{W}_{i}^{-1} \cdots \check{W}_{2}^{-1} \check{W}_1 \check{W}_2 \cdots \check{W}_{i} = W_{i}^{-1} \cdots W_{2}^{-1} W_1 W_2 \cdots W_{i}$$ 
 as elements in $A(\Gamma)$ ($i \geq 2$). 
\end{enumerate}

Let us start the construction of the words $\check{W}_1, \ldots, \check{W}_{n-1}$.

\begin{enumerate}
 \item[(Step 1)] $\check{W}_1 \equiv W_1$. 
 Obviously $\check{W}_1$ satisfies (W-$1$). 
 \item[(Step 2)] If $W_2$ is a word in $Y^{(2)}$, set $\check{W}_2 \equiv W_2$. 
 Then the word $\check{W}_2$ satisfies (W-$2$). 
 We now suppose that $W_2$ is not a word in $Y^{(2)}$,  i.e., there is a vertex $v \in V(C_2) \setminus Y^{(2)}$ such that the letter $v^{\epsilon}$ ($\epsilon = \pm 1$) is contained in $W_2$. 
We write $W_2 \equiv w_2 v^{\epsilon} w_2'$. 
Then $W_2^{-1} \check{W}_1 W_2 \equiv (w_2')^{-1} (v^{\epsilon})^{-1} w_2 \check{W}_1 w_2 v^{\epsilon} w_2'$. 
Note that the letter $v^{\epsilon}$ is commutative with $w_2$, because $C_2$ is a clique containing $v$. 
By the definition of $Y^{(2)}$, the letter $v^{\epsilon}$ is commutative with $\check{W}_1$. 
Hence, $W_2^{-1} \check{W}_1 W_2 = (w_2')^{-1} (w_2)^{-1} \check{W}_1 w_2 w_2'$ in $A(\Gamma)$. 
If $w_2 w_2'$ is a word in $Y^{(2)}$, we set $\check{W}_2 \equiv w_2 w_2'$. 
If not, then applying the same reduction to $w_2 w_2'$
until we obtain a word $\check{W}_2$ in $Y^{(2)}$. 
Then $\check{W}_2$ satisfies the condition (W-$2$). 

 \item[(Step $i$)] Assume that $\check{W}_1, \ldots, \check{W}_{i-1}$ satisfy the conditions (W-$1$), (W-$2$)$,\ldots,$(W-$(i-1)$), respectively. 
 If $W_i$ is a word in $Y^{(i)}$, set $\check{W}_i \equiv W_i$. 
 Then the word $\check{W}_i$ satisfies (W-$i$). 
 We now suppose that $W_i$ is not a word in $Y^{(i)}$.    
 Since $W_i$ is not a word in $Y^{(i)}$, there is a vertex $v \in V(C_i) \setminus Y^{(i)}$ such that the letter $v^{\epsilon}$ ($\epsilon = \pm 1$) is contained in $W_i$. 
 So we write $W_i \equiv w_i v^{\epsilon} w_i'$. 
 Then we have the following equality: 
 \begin{align*}
 &W_i^{-1} \check{W}_{i-1}^{-1} \cdots \check{W}_{2}^{-1} \check{W}_1 \check{W}_2 \cdots \check{W}_{i-1} W_i \\ 
 \equiv &(w_i')^{-1} (v^{\epsilon})^{-1} (w_i)^{-1} \check{W}_{i-1}^{-1} \cdots \check{W}_{2}^{-1} \check{W}_1 \check{W}_2 \cdots \check{W}_{i-1} w_i v^{\epsilon} w_i'.
 \end{align*}
 Since $\check{W}_1, \ldots, \check{W}_{i-2}$ are words in $Y^{(1)}, \ldots, Y^{(i-2)}$, respectively, the letter $v^{\epsilon}$ is commutative with each of $\check{W}_1, \ldots, \check{W}_{i-2}$ by Claim \ref{commutative}. 
 In addition, since $v \in V(C_i) \setminus Y^{(i)}$ and since $\check{W}_{i-1}$ is a word in $Y^{(i-1)}$, the letter $v^{\epsilon}$ is commutative with $\check{W}_{i-1}$.  
 Furthermore, $v^{\epsilon}$ is commutative with $w_i$, because $C_i$ is a clique. 
 Thus we have: 
  \begin{align*}
 &(w_i')^{-1} (v^{\epsilon})^{-1} (w_i)^{-1} \check{W}_{i-1}^{-1} \cdots \check{W}_{2}^{-1} \check{W}_1 \check{W}_2 \cdots \check{W}_{i-1} w_i v^{\epsilon} w_i' \\
 =& (w_i')^{-1} (w_i)^{-1} \check{W}_{i-1}^{-1} \cdots \check{W}_{2}^{-1} \check{W}_1 \check{W}_2 \cdots \check{W}_{i-1} w_i w_i'. 
 \end{align*}
 If $w_i w_i'$ is a word in $Y^{(i)}$, we set $\check{W}_i \equiv w_i w_i'$. 
If not, then applying the same reduction to $w_i w_i'$
until we obtain a word $\check{W}_i$ in $Y^{(i)}$. 
In the end, $\check{W}_i$ obviously satisfies the condition (W-$i$). 
\end{enumerate}

By Claim \ref{claim-2}, $\check{W}_1, \ldots, \check{W}_{n-1}$ are commutative with $W_n$, which is a representative of $\psi(v_n)$. 
Since ${\psi(v_1)}^{\psi(v_2) \psi(v_3) \cdots \psi(v_{n-1})}$ is a multiplication of $\check{W}_1, \ldots, \check{W}_{n-1}$, it is commutative with $\psi(v_n)$. 
Thus the commutator $[(v_1)^{v_2 v_3\cdots v_{n-1}}, v_n]$ is an element of $\mathrm{ker}\psi$, as desired. 
\end{proof}

To treat the case where $n=3$, we use the following lemma due to Kim--Koberda. 

\begin{lemma}[{\cite[Theorem 5.4]{Kim--Koberda13}}]
Let $\Lambda$ be a finite graph whose right-angled Artin group $A(\Lambda)$ has no center. 
Suppose that $A(\Lambda)$ has an embedding into the direct product $G_1 \times G_2$ of (non-trivial) groups $G_1, G_2$. 
If the natural projections $A(\Lambda) \rightarrow  G_i$ ($i=1,2$) have non-trivial kernels, then $\Lambda$ contains a full subgraph isomorphic to the cyclic graph of length $4$. 
\label{Kim--Koberda-P_3^c}
\end{lemma}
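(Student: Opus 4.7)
The plan is to extract two non-trivial commuting normal subgroups of $A(\Lambda)$ from the embedding, use centerlessness to promote each to a non-abelian free subgroup, and then descend from the resulting $F_2 \times F_2$ inside $A(\Lambda)$ to the desired four-vertex configuration in $\Lambda$.

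First I would translate the hypothesis into an internal structure on $A(\Lambda)$. Let $\pi_i \colon G_1 \times G_2 \to G_i$ be the coordinate projections and set $N_i := \ker(\pi_i|_{A(\Lambda)})$, viewing $A(\Lambda)$ as a subgroup of $G_1 \times G_2$. Each $N_i$ is normal in $A(\Lambda)$, non-trivial by hypothesis, and $N_1 \cap N_2 = \{1\}$ because the embedding is injective. Moreover $N_1 \subseteq \{1\} \times G_2$ and $N_2 \subseteq G_1 \times \{1\}$, so $[N_1, N_2] = 1$. Thus $\langle N_1, N_2 \rangle$ is isomorphic to $N_1 \times N_2$ and sits inside $A(\Lambda)$.

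Second, I would enlarge each $N_i$ to contain a non-abelian free subgroup using centerlessness of $A(\Lambda)$. Pick any non-trivial $n_i \in N_i$. Since $Z(A(\Lambda)) = 1$, some element $a_i \in A(\Lambda)$ fails to commute with $n_i$, and normality places $a_i n_i a_i^{-1}$ back in $N_i$. By Baudisch's theorem---any two non-commuting elements of a right-angled Artin group generate a copy of $F_2$---the pair $n_i, a_i n_i a_i^{-1}$ generates a free subgroup of rank two inside $N_i$. Combining the two commuting factors yields an embedding $F_2 \times F_2 \hookrightarrow A(\Lambda)$.

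Third, I would descend from this embedding to a full $C_4$ in $\Lambda$ itself. This is the step I expect to be the main obstacle: an abstract embedding $F_2 \times F_2 \hookrightarrow A(\Lambda)$ only produces $C_4$ as a full subgraph of the extension graph $\Lambda^e$ via the Kim--Koberda machinery, whereas here we must locate $C_4$ inside $\Lambda$. To bridge this gap I would replace the relevant elements by cyclically reduced representatives and apply Servatius' Centralizer Theorem (Lemma~\ref{commutative}) together with the join-decomposition analysis of Section~\ref{graph-join-sec}: the supports of two commuting elements should lie in a join-decomposition of a parabolic subgraph of $\Lambda$, with each factor carrying one of the two $F_2$'s. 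Centerlessness should then force each side of this join to contain a pair of non-adjacent vertices (otherwise one side contributes to the center), and the four resulting vertices should span the required full $C_4$. Making this last reduction produce a full $C_4$ in $\Lambda$ rather than merely one in $\Lambda^e$ is the delicate technical point.
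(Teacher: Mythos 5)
The paper does not prove this lemma at all: it is quoted from Kim--Koberda \cite[Theorem 5.4]{Kim--Koberda13} and used as a black box in the proof of Lemma \ref{one-component-anti-path-3}, so there is no in-paper argument to compare yours against. Judged on its own terms, your outline follows the standard strategy (essentially that of Kim--Koberda's original proof): extract two non-trivial, elementwise-commuting normal subgroups $N_1,N_2$ from the two kernels, upgrade each to a copy of $F_2$, and then use the fact that $F_2\times F_2\hookrightarrow A(\Lambda)$ forces $C_4\leq\Lambda$. Your first step is correct and complete.

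The other two steps have genuine gaps. In the second step, Baudisch's dichotomy applies to the pair $n_i,\,a_in_ia_i^{-1}$ only if these two elements fail to commute, and you have only arranged that $n_i$ and $a_i$ fail to commute; a priori $n_i$ could commute with every one of its conjugates, i.e.\ $N_i$ could contain (or be) a non-trivial abelian normal subgroup. What is actually needed is that a centerless right-angled Artin group has no non-trivial abelian normal subgroup, so that a non-trivial normal subgroup must contain two non-commuting elements; this is true, but it requires a real argument (via Servatius' Centralizer Theorem, or the Tits alternative for RAAGs combined with the join decomposition), and it is precisely the content of the auxiliary lemmas preceding Theorem 5.4 in \cite{Kim--Koberda13}. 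In the third step, the implication $F_2\times F_2\hookrightarrow A(\Lambda)\Rightarrow C_4\leq\Lambda$ is exactly Kambites' theorem on embeddings of $A(C_4)$; it is a non-trivial published result, and your paragraph only gestures at a strategy for it --- you yourself flag it as ``the delicate technical point'' without carrying it out. As written, the proposal is a correct high-level road map rather than a proof; to make it complete you must either cite Kambites' theorem and the no-abelian-normal-subgroup lemma explicitly, or supply proofs of both.
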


\begin{lemma}
Let $\Gamma$ be a finite graph. 
Suppose that $\psi \colon A(P_3^c) \rightarrow A(\Gamma)$. 
Then there is a full embedding $\iota \colon P_3^c \rightarrow \Gamma$ with $\iota(P_3^c) \subset \mathrm{supp}(\psi)$. 
\label{one-component-anti-path-3}
\end{lemma}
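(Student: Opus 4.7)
The plan is to mirror the proof of Lemma \ref{one-component-anti-path}, splitting into cases and invoking Lemma \ref{Kim--Koberda-P_3^c} where the $n \geq 4$ commutator argument breaks down. As in Lemma \ref{one-component-anti-path}, I assume $\psi$ is an embedding satisfying the (KK) condition (which the statement implicitly requires, matching the convention of Lemma \ref{one-component-anti-path}). Label the vertices of $P_3^c$ as $v_1,v_2,v_3$ so that $\{v_1,v_3\}$ is the unique edge; put $W_i:=\psi(v_i)$ and let $C_i$ be the clique of $\Gamma$ spanned by $\mathrm{supp}(W_i)$. Since $W_1 W_3 = W_3 W_1$, Lemma \ref{commutative} places $V(C_1) \cup V(C_3)$ inside a common clique $K \subseteq \Gamma$. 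The aim is to find mutually distinct $y_i \in V(C_i)$ with $\{y_1, y_2\}, \{y_2, y_3\} \notin E(\Gamma)$; the edge $\{y_1, y_3\}$ is then automatic from $K$.

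Suppose for contradiction no such triple exists, and partition $V(C_2) = B_1 \cup B_3 \cup T$, where $B_j := \{u \in V(C_2) : V(C_j) \cup \{u\}$ lies in a clique of $\Gamma\}$ and $T := V(C_2) \setminus (B_1 \cup B_3)$; equivalently, $T$ consists of those $u \in V(C_2)$ with a non-neighbor in both $V(C_1)$ and $V(C_3)$. In the case $T = \emptyset$, I would use $[v_2 v_1 v_2^{-1}, v_3]$ along the template of Lemma \ref{one-component-anti-path}. Using the clique commutativity of $V(C_2)$, decompose $W_2 = N\cdot B\cdot C$ with $N, B, C$ words in $B_1 \cap B_3$, $B_3 \setminus B_1$, $B_1 \setminus B_3$ respectively. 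A direct manipulation (using that $N$ commutes with both $W_1, W_3$, that $C$ commutes with $W_1$, and that $B$ commutes with $W_3$) yields $W_2 W_1 W_2^{-1} = B W_1 B^{-1}$, and then $[B W_1 B^{-1}, W_3] = 1$ (using $[B,W_3]=[W_1,W_3]=1$), giving $\psi([v_2 v_1 v_2^{-1}, v_3]) = 1$. A check via Lemma \ref{reduced} confirms that $v_2 v_1 v_2^{-1} v_3 v_2 v_1^{-1} v_2^{-1} v_3^{-1}$ is reduced, hence non-trivial in $A(P_3^c)$, contradicting injectivity of $\psi$.

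The remaining case $T \neq \emptyset$ is where Lemma \ref{Kim--Koberda-P_3^c} enters. Under the non-existence of the desired triple, any $u \in T$ has its non-neighbors in $V(C_1)$ and in $V(C_3)$ collapsing to a single common vertex $z(u) \in V(C_1) \cap V(C_3)$: otherwise distinct non-neighbors $y_1 \in V(C_1)$, $y_3 \in V(C_3)$ of $u$ would yield the forbidden triple $(y_1,u,y_3)$. I would then verify that $z(u)$ is independent of $u \in T$: if $z(u) \neq z(u')$ for some $u,u'\in T$, then on $\{z(u), z(u'), u, u'\}$ the induced subgraph of $\Gamma$ is exactly $C_4$ (the edges $\{z(u),z(u')\}$, $\{u,u'\}$, $\{z(u),u'\}$, $\{z(u'),u\}$ being forced by cliqueness of $K$ and $V(C_2)$ and by the uniqueness of $z(u), z(u')$; the non-edges $\{z(u),u\}$, $\{z(u'),u'\}$ being immediate); applying Lemma \ref{Kim--Koberda-P_3^c} to the composition $A(P_3^c) \to A(\{z(u), z(u'), u, u'\}) = F_2 \times F_2$ induced by the natural retraction, and noting that $v_1^{p_3} v_3^{-p_1}$ and $v_1^{q_3} v_3^{-q_1}$ (with $p_i, q_i$ the $z(u), z(u')$ exponents in $W_i$) lie in the two projections' kernels, yields the required contradiction. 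Write $z_0$ for this common vertex. Then $S_2 := T \cup \{z_0\}$ and $S_1 := \mathrm{supp}(\psi) \setminus S_2$ form a join decomposition of the induced subgraph on $\mathrm{supp}(\psi)$ (every $t \in T$ has $z_0$ as its only non-neighbor in $V(C_1) \cup V(C_3) \subset S_1$, and every $s \in V(C_2) \setminus T = B_1 \cup B_3$ is adjacent to $z_0 \in V(C_1) \cap V(C_3)$), yielding $A(\mathrm{supp}(\psi)) = A(S_1) \times A(S_2)$. The element $v_1^b v_3^{-a}$ (with $a,b$ the $z_0$-exponents of $W_1, W_3$, both non-zero since $z_0 \in V(C_1) \cap V(C_3)$) lies in the kernel of the $A(S_2)$-projection; a parallel argument using the $T$-part of $W_2$ produces a non-trivial kernel element of the $A(S_1)$-projection, and Lemma \ref{Kim--Koberda-P_3^c} finishes the contradiction. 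The main obstacle will be this last step: rigorously deducing the non-trivial kernel for the $A(S_1)$-projection in the subcase $V(C_2) \supsetneq T$, and justifying the retraction-based contradiction used to collapse the $z(u)$ to a single vertex $z_0$, since retractions of right-angled Artin groups need not be injective on $\psi(A(P_3^c))$.
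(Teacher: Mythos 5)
Your early steps are sound: the case $T=\emptyset$ computation showing $[v_2v_1v_2^{-1},v_3]\in\ker\psi$ is correct, and the join decomposition $A(\mathrm{supp}(\psi))=A(S_1)\times A(S_2)$ is verified properly. But the step you flag yourself is a genuine gap, not a technicality. To collapse the vertices $z(u)$ to a single $z_0$ you propose applying Lemma \ref{Kim--Koberda-P_3^c} to the composition of $\psi$ with the retraction of $A(\Gamma)$ onto $A(\{z(u),z(u'),u,u'\})\cong F_2\times F_2$; but Lemma \ref{Kim--Koberda-P_3^c} requires an \emph{embedding} of $A(P_3^c)$ into the product, and this composition is in fact never injective (indeed $A(P_3^c)\cong\mathbb{Z}^2 * \mathbb{Z}$ does not embed in $F_2\times F_2$ at all, precisely by the centerless/not-free/no-$C_4$ argument), so no contradiction can be extracted this way. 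Without uniqueness of $z_0$, the set $Z=\{z(u):u\in T\}$ may have several elements, the projection of $\langle v_1,v_3\rangle\cong\mathbb{Z}^2$ to $A(S_2)$ lands in $\mathbb{Z}^{|Z|}$ and may well be injective, and your kernel element $v_1^{b}v_3^{-a}$ for the $A(S_2)$-factor is no longer available. (The other obstacle you mention, the kernel of the $A(S_1)$-projection, is actually not a problem: the same $N\cdot B\cdot C$ manipulation shows $[v_2v_1v_2^{-1},v_3]\in\ker(\pi_1\circ\psi)$ even when $T\neq\emptyset$, since $\pi_1(W_2)$ is a word in $B_1\cup B_3$.)

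The paper's proof avoids all of this case analysis. If $P_3^c$ is not a full subgraph of the graph spanned by $\mathrm{supp}(\psi)$, then the complement of that graph contains no induced $P_3$, hence every connected component of the complement is complete; so the graph is a join of edgeless graphs and $A(\mathrm{supp}(\psi))$ is a direct product $A_1\times\cdots\times A_m$ of free groups. Lemma \ref{Kim--Koberda-P_3^c} is then applied to $\psi$ itself (which \emph{is} injective into this product), using that $A(P_3^c)$ is centerless, not free, and contains no induced square, to show that one projection must be injective; peeling off one free factor at a time leads to the impossible conclusion $A(P_3^c)\hookrightarrow A_1$. Note that this argument uses only injectivity of $\psi$, not condition (KK). If you want to salvage your route, you would need either a direct combinatorial proof that $|Z|=1$ or a different non-trivial element of $\ker(\pi_2\circ\psi)$ when $|Z|\geq 2$; as written the argument does not close.
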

\begin{proof}
For simplicity, we assume that $\mathrm{supp}(\psi) = V(\Gamma)$. 
Suppose, on the contrary, that 
\begin{itemize}
 \item[(A)] $P_3^c$ is not a full subgraph of $\Gamma$. Namely, $P_3$ is not a full subgraph of $\Gamma^c$. 
\end{itemize}
We first prove that $\Gamma^c$ is the disjoint union of finitely many complete graphs. 
Let $C$ be a connected component of $\Gamma^c$. 
If $\# V(C) \leq 2$, then the connectedness of $C$ obviously implies that $C$ is complete. 
So we may assume $\# V(C) \geq 3$. 
Pick two edges $e_1^c$ and $e_2^c$ of $C \leq \Gamma^c$ that share a vertex. 
Then, by our assumption (A), the set $e_1^c \cup e_2^c$ of vertices spans a triangle in $\Gamma^c$. 
In other words, the initial vertex and terminal vertex of any edge-path consisting of three vertices in $C$ is adjacent. 
By repeatedly using this fact, we can verify that, for any edge-path in $C$, the initial vertex is adjacent to the terminal vertex. 
Therefore the connected component $C$ must be complete. 
Thus, $\Gamma^c$ is the disjoint union of finitely many complete graphs, and so $\Gamma$ is the join of finitely many edgeless graphs. 
Hence, $A(\Gamma)$ is the direct product $A_1 \times \cdots \times A_m$ of free groups $A_1, \ldots, A_m$. 
Since $A(P_3^c)$ is not free and since $A_1 \times \cdots \times A_m$ contains an embedded $A(P_3^c)$, the integer $m$ is greater than $1$. 
We now regard $A(\Gamma)$ as the direct product $(A_1 \times \cdots \times A_{m-1}) \times A_m$ of two direct factors, $A_1 \times \cdots \times A_{m-1}$ and $A_m$. 
Let $\pi_{m-1}, \pi_m$ denotes the projections $A(P_3^c) \rightarrow A_1 \times \cdots \times A_{m-1}$ and $A(P_3^c) \rightarrow A_m$, respectively. 
Then since $A(P_3^c)$ is not free, the projection $\mathrm{ker}\pi_m$ must be non-trivial. 
Note that $A(P_3^c) \cong \mathbb{Z} * \mathbb{Z}^2$ has no center. 
If $\mathrm{ker} \pi_{m-1}$ is non-trivial, then by Lemma \ref{Kim--Koberda-P_3^c}, the defining graph $P_3^c$ must have a full subgraph isomorphic to the cyclic graph of length $4$, a contradiction. 
So we may assume that $\pi_{m-1}$ is injective. 
In other words, $A(P_3^c) \hookrightarrow A_1 \times \cdots \times A_{m-1}$. 
Hence, by repeating this argument, we can reduce the number of the direct factors in the target group. 
Finally, we have that $A(P_3^c) \hookrightarrow A_1$, which is impossible. 
\end{proof}

\begin{lemma}
Let $\Lambda$ be the join of the complements of finitely many path graphs and $\Gamma$ a finite graph. 
Suppose that $\psi \colon A(\Lambda) \rightarrow A(\Gamma)$ is an embedding satisfying condition (KK). 
Then there is a full embedding $\iota \colon \Lambda \rightarrow \Gamma$ with $\iota(\Lambda) \subset  \mathrm{supp}(\psi)$. 
\label{general-case}
\end{lemma}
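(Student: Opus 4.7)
The plan is to decompose $\Lambda$ into its irreducible join-components, apply the single-component Lemmas \ref{one-component-anti-path} and \ref{one-component-anti-path-3} to each, and then assemble the resulting full embeddings by invoking Proposition \ref{joining-EGP}.

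First I would write $\Lambda = P_{n_1}^c * \cdots * P_{n_m}^c$, where each $P_{n_i}$ is a path graph. I would verify that each join-component $P_{n_i}^c$ is irreducible with respect to join: since $P_{n_i}$ is connected, Lemma \ref{irr-nonad} gives irreducibility (and a single vertex is trivially irreducible). This puts us in the setting required to apply Proposition \ref{joining-EGP}.

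Next I would restrict $\psi$ to each factor $A(P_{n_i}^c) \subset A(\Lambda)$. The restriction $\psi_i \colon A(P_{n_i}^c) \hookrightarrow A(\Gamma)$ is again an embedding and still satisfies condition (KK), simply because (KK) is a condition on the support of the $\psi$-image of each vertex. Now I would split into cases according to $n_i$: if $n_i \neq 3$, Lemma \ref{one-component-anti-path} produces a full embedding $\iota_i \colon P_{n_i}^c \rightarrow \Gamma$ with $\iota_i(P_{n_i}^c) \subset \mathrm{supp}(\psi_i)$; if $n_i = 3$, Lemma \ref{one-component-anti-path-3} provides the same conclusion.

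Finally, I would feed the data $\psi$ (which satisfies (KK)) together with the family of full embeddings $\{\iota_i\}_{i=1}^{m}$ into Proposition \ref{joining-EGP}. Its conclusion is precisely the existence of a full embedding $\iota \colon \Lambda \rightarrow \Gamma$ with $\iota(\Lambda) \subset \mathrm{supp}(\psi)$, which is what we want. Since all the real work has already been done in the single-component lemmas and in the join-assembly proposition, I do not expect any serious obstacle in this step; the main subtlety to watch out for is just confirming that the irreducibility hypothesis of Proposition \ref{joining-EGP} is indeed met by every $P_{n_i}^c$, which is why the preliminary application of Lemma \ref{irr-nonad} matters.
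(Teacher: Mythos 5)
Your proposal is correct and follows essentially the same route as the paper's own proof: decompose $\Lambda$ into the irreducible join-components $P_{n_i}^c$, restrict $\psi$ to each factor (noting (KK) is inherited), invoke Lemma \ref{one-component-anti-path} or Lemma \ref{one-component-anti-path-3} according to whether $n_i \neq 3$ or $n_i = 3$, and assemble via Proposition \ref{joining-EGP} after checking irreducibility through Lemma \ref{irr-nonad}. No gaps.
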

\begin{proof}
Suppose that $\Lambda_1, \ldots, \Lambda_m$ is the irreducible graphs such that $\Lambda= \Lambda_1 * \cdots * \Lambda_m$ and $\Lambda_i \cong P_{n_{i}}^c$ ($1 \leq i \leq m$). 
Then by restricting $\psi$ to each direct factor $A(\Lambda_i)$, we obtain $\psi \colon A(\Lambda_i) \hookrightarrow A(\Gamma)$ with condition (KK). 
Now by Lemma \ref{one-component-anti-path} and \ref{one-component-anti-path-3}, we obtain full embeddings $\iota_i \colon \Lambda_i \rightarrow \Gamma$ with $\iota_i(\Lambda_i) \subset \mathrm{supp}(\psi_i)$ for ($1 \leq i \leq m$). 
Since $\Lambda_1^c, \ldots, \Lambda_m^c$ are path graphs, their connectedness together with Lemma \ref{irr-nonad} implies that $\Lambda_1, \ldots, \Lambda_m$ are irreducible. 
Thus, by applying Proposition \ref{joining-EGP} to $\Lambda= \Lambda_1 * \cdots * \Lambda_m$, we obtain the result that there is a full embedding $\iota \colon \Lambda \rightarrow \Gamma$ with $\iota(\Lambda) \subset \mathrm{supp}(\psi)$. 
\end{proof}

\begin{proof}[Proof of Theorem \ref{linear-forest_lemma} (Theorem \ref{linear-forest_lemma_reph}) ]
Suppose that there is an embedding of the right-angled Artin group $A(\Lambda)$ on the join $\Lambda$ of the complements of finitely many path graphs into the right-angled Artin group $A(\Gamma)$ on a finite graph $\Gamma$. 
By Theorem \ref{Kim--Koberda} due to Kim--Koberda, we have an embedding $\psi \colon A(\Lambda) \hookrightarrow A(\Gamma^e)$ satisfying condition (KK), where $\Gamma^e$ is the extension graph of $\Gamma$. 
Consider the full subgraph $\Gamma'$ of $\Gamma^e$, which is spanned by $\mathrm{supp}(\psi) = \cup_{v \in V(\Lambda)} \mathrm{supp}(\psi(v))$. 
Then we have an embedding $\psi \colon A(\Lambda) \hookrightarrow A(\Gamma')$ satisfying condition (KK). 
Now, by Lemma \ref{general-case}, we have $\Lambda \leq \Gamma'$. 
Thus $\Lambda \leq \Gamma' \leq \Gamma^e$, as desired. 
\end{proof}

\end{document}